\newtheorem{thm}{Theorem}[section]
\newtheorem{lemma}[thm]{Lemma}
\newtheorem{prop}[thm]{Proposition}
\newtheorem{proposition}[thm]{Proposition}
\newtheorem{definition}[thm]{Definition}
\newtheorem{remark}[thm]{Remark}
\newtheorem{examples}[thm]{Examples}
\numberwithin{equation}{section}
\begin{document}

\noindent
{{\Large\bf About the infinite dimensional skew and obliquely reflected Ornstein-Uhlenbeck 
process\footnote{This research was supported by DFG through Grant Ro 1195/10-1 and by NRF-DFG Collaborative Research program through the National Research Foundation of Korea NRF-2012K2A5A6047864.}}

\bigskip
\noindent
\centerline{{\bf Michael R\"ockner, Gerald Trutnau}}
\\

\begin{abstract}Based on an integration by parts formula for closed and convex subsets $\Gamma$ of a separable real Hilbert space $H$ with respect to a Gaussian measure, 
we first construct and identify the infinite dimensional analogue of the obliquely reflected Ornstein-Uhlenbeck process (perturbed by a bounded drift $B$) by means of a Skorokhod type decomposition. 
The variable oblique reflection at a reflection point of the boundary $\partial \Gamma$ is uniquely described through a reflection angle and a direction in the 
tangent space (more precisely through an element of the orthogonal complement of the normal vector) at the reflection point. In case of normal reflection at the boundary of a regular convex set 
and under some monotonicity condition on $B$, we prove the existence and uniqueness of a strong solution to the corresponding SDE. 
Subsequently, we consider an increasing sequence $(\Gamma_{\alpha_k})_{k\in\mathbb{Z}}$ 
of closed and convex subsets of $H$ and the skew reflection problem at the boundaries of this sequence. We present concrete examples and obtain as a special case the infinite dimensional analogue of the 
$p$-skew reflected Ornstein-Uhlenbeck process.

\end{abstract}

\noindent
{Mathematics Subject Classification (2010): primary; 60J60, 31C25, 60J55; secondary: 60JXX, 60GXX.}\\

\noindent 
{Key words: Dirichlet forms, skew reflection, oblique reflection, integration by parts formula in infinite dimensions, BV function.}

\section{Introduction}
The construction and the analysis of stochastic differential equations with reflection on a closed subset $\Gamma$ of a finite 
dimensional state space is meanwhile a well-established topic and one can say that there are at least two approaches to it. 
The first approach is probabilistic, which can be realized by local time calculus (see e.g. \cite[chapter VI]{RY99}, \cite{Pe07}, \cite{HS81}), 
by solving the Skorokhod problem (\cite{LS84}, \cite{DI93}, \cite{C98}, \cite{DuRa99}), or by penalization methods (\cite{LMS81}, \cite{PW94}). 
In particular, the approach is realized through \lq\lq direct\rq\rq\ stochastic calculus. \\
The second approach is an indirect analytic one and uses as a main ingredient an integration by parts (hereafter IBP) formula on $\Gamma$, such as the weak Gauss Theorem 
for regular domains $\Gamma$. The connection to the corresponding diffusion can be  made by associating the infinitesimal generator of 
the diffusion with adequate boundary conditions through the IBP formula to a bilinear form, a so-called 
(generalized) Dirichlet form. Then, one uses the corresponding machinery (see \cite{K87}, \cite{BaHsu91}, \cite{FOT11}, \cite{MR92}, \cite{St99}, \cite{O13}, \cite{Tr03} and references therein). 
The approach is indirect because one has to establish the connection between an analytic boundary term, a surface measure on the boundary $\partial\Gamma$ 
that appears in the IBP formula, and a local time, i.e. the reflection term that appears in the stochastic differential equation (hereafter SDE) of the diffusion. This connection is established 
through Revuz's correspondence and originates from \cite{R70} (cf. \cite[Theorem 5.1.4]{FOT11} and \cite[Chapter X]{RY99}). \\
While direct stochastic calculus is often dimension dependent and is the most powerful in dimension one, the indirect approach is dimension 
independent and only relies on an integration by parts formula. In particular, the Revuz correspondence has been shown to be also valid on 
infinite dimensional state spaces (see \cite[VI. Theorem 2.4]{MR92}, and \cite[Theorem 3.1]{Tr00}). \\
In this paper, we will use the theory of Dirichlet forms and the observation that a concrete IBP formula 
on an infinite dimensional space (namely (\ref{7}) below) together with the Revuz correspondence will lead to the explicit identification of weak solutions to 
infinite dimensional analogues of well-known reflected SDEs, like oblique and skew reflection (see \cite{HS81}, \cite{DI93}).\\ 
An IBP formula on a closed subset of an infinite dimensional linear space first appears in \cite{Z01} (see also \cite{Z02}). 
There it was shown that the solution of some SPDE on
$L^2((0,1), d\xi)$ with reflection, which is forced to stay positive by penalization (see \cite{NP92}), admits a corresponding IBP formula with respect to the $3d$-Bessel bridge $\nu$, and  
$\nu$ has closed and convex support on the nonnegative functions in $L^2((0,1), d\xi)$. 
In \cite{RZZ12} the notion of BV functions in a Gelfand triple is defined, which is an extension of the definition of BV functions in a Hilbert space 
in \cite{AMMP10} (see also \cite{F00}, \cite{FH01} in case of the abstract Wiener space). In  \cite{RZZ12} they consider the Dirichlet form determined by
\begin{eqnarray}\label{55}
{\cal E}^{\rho}(u,v)=\frac{1}{2}\int_{H}\langle Du, Dv\rangle \rho(z)\mu(dz), \ \  u,v\in C^1_b(\text{supp}(\rho d\mu)) 
\end{eqnarray}
where  $H$ is a Hilbert space, $D$ the Fr\'echet derivative, $\rho$ a strictly positive and integrable BV function,  
$\mu$ a Gaussian measure in $H$, and obtain a Skorokhod representation for the associated process (a reflected Ornstein-Uhlenbeck (OU) process), 
if $\rho=\mathbb{I}_\Gamma$ and $\Gamma$ is a closed and convex set. For a nice convex set with non-empty interior, 
as in (${\it0}4$) of Subsection \ref{s3.1} below, the corresponding generator of the Dirichlet form is identified 
and hence an IBP formula is derived in \cite{BDT09}, \cite{BDT10}. In this case $\mathbb{I}_\Gamma\in BV(H,H)$ (cf. Lemma 3.7(ii) and for the definition of $BV(H,H)$ see Section \ref{sec2}). In \cite{RZZ12} the generalized weak Gauss Theorem reads as
\begin{eqnarray*}
\int_{\Gamma} D^* G(z)\mu(dz)=-\int_{\partial \Gamma} \,   _{H_1}\langle G(z), \eta_{\Gamma}\rangle_{ H_1^*} \|\partial \Gamma\|(dz) 
\end{eqnarray*}
where $(H_1,H,H_1^*)$ is a Gelfand triple of Hilbert spaces, $\eta_{\Gamma}:H\to H_1^*$ the \lq\lq exterior normal\rq\rq\  to $\partial\Gamma$, 
$\|\partial \Gamma\|$ a finite positive measure, i.e. the \lq\lq surface measure\rq\rq\ on $\partial \Gamma$ (for the precise definitions see explanations around (\ref{7})). 
A concrete situation, where the generalized weak Gauss Theorem holds and where  $\mathbb{I}_\Gamma\in BV(H,H_1)\setminus BV(H,H)$ (for the definition of $BV(H,H_1)$ see Section \ref{sec2}), is analyzed in \cite[Section 6]{RZZ12} (see Example \ref{exam1}(ii) below).\\ 
The organization, contents and the main results of this paper are as follows. In Section \ref{sec2} we introduce the general setting and the framework of BV-functions in a Gelfand triple from \cite{RZZ12}. Then, we choose 
${\cal E}^{\rho}$ as a reference Dirichlet form, where we suppose that $\rho$ satisfies either (\ref{2}) or (\ref{3})  
and that there exists a local (possibly non-symmetric) Dirichlet form $\mathcal{E}$ which is equivalent to ${\cal E}^{\rho}$ in the sense of (\ref{H2}). 
In case of (\ref{2}), $\mathcal{E}$ 
will turn out to be the Dirichlet form of the (countably) skew reflected OU-process and in 
case of (\ref3}), $\mathcal{E}$ 
will turn out to be the Dirichlet form of the obliquely reflected OU-process. Assumption (\ref{H2}) guarantees that the $\mathcal{E}^\rho$-smooth measures and $\mathcal{E}$-smooth measures are the 
same (see Remark \ref{r2}(i)) and that $\mathcal{E}^\rho$ and $\mathcal{E}$ share important common properties (see Theorem \ref{th1}). Proposition \ref{p3} constitutes a necessary tool to identify 
 $M^{[l]}$ and $N^{[l]}$ in the Fukushima decomposition (\ref{6}) related to $\mathcal{E}$, i.e. Proposition \ref{p3} will serve later for the identification of Skorokhod type decompositions in case of an explicitly specified $\mathcal{E}$. A further necessary tool is the fundamental formula (\ref{9}) of Subsection \ref{s2.1}. It follows in a straightforward manner from the IBP formula (\ref{7}) and is the main analytic tool in \cite{RZZ12} to obtain weak existence of the normally reflected OU-process. The fundamental formula (\ref{9}) is used as a basis to derive the IBP formulas in case of oblique and skew reflection. These are given in Propositions  \ref{p6} and \ref{l14}  and are fundamental for our main results.  \\
In Section \ref{sec3}, we introduce the basic setting for oblique reflection on a convex and closed subset $\Gamma$ of $H$, 
such that $\mathbb{I}_\Gamma\in BV(H,H)$. We fix a fully antisymmetric dispersion matrix $\breve{A}=(\breve{a}_{ij})_{i,j\geq1}$ satisfying (${\it0}1$)-(${\it0}3$) (see beginning of Section \ref{sec3}). Here, we need the boundedness of $\Gamma$ in order to guarantee the boundedness of the antisymmetric part of the logarithmic derivative $\beta^{\mu,\breve{A}}$ of $\mu$ with respect to $\breve{A}$ (cf. Remark \ref{r7}). Subsequently, the Dirichlet form $\mathcal{E}^{\Gamma,\bar{A}}$, where $\bar{A}:= \breve{A} + \textup{Id}$ is introduced. 
$\mathcal{E}^{\Gamma,\bar{A}}:=\mathcal{E}$ is sectorial and coincides with $\mathcal{E}^{\Gamma}:=\mathcal{E}^{\mathbb{I}_\Gamma}$ on the diagonal, thus satisfies (\ref{H2}) (see Lemma \ref{l4}), and serves in a first step to obtain via the IBP formulas Lemma \ref{l5} and Proposition \ref{p6}, the obliquely reflected OU-process with drift $\beta^{\mu,\breve{A}}$ in Proposition \ref{p3.4}. We then use a Girsanov transformation in infinite dimensions to remove 
$\beta^{\mu,\breve{A}}$ in Proposition \ref{p3.4} and by this, we obtain the obliquely reflected OU-process with bounded drift $B$ in Theorem \ref{th3.6}, which is the first main result of this paper. In Subsection \ref{s3.1}, we consider a (bounded) regular convex set given by (${\it0}4$). Then it is known that $\mathbb{I}_\Gamma\in BV(H,H)$ and so Theorem \ref{th3.6} can be concretely reformulated as Theorem \ref{th3.7a} under this setting. In Remark \ref{r16} the solution of Theorem \ref{th3.6} is characterized as an obliquely reflected process. 
In Subsection \ref{s3.2}, we consider the normally reflected case, i.e. $\breve{A}\equiv 0$, so we may drop the assumption on boundedness of $\Gamma$ (see Remark \ref{r7}). Noting that we construct the weak solution in Theorem \ref{th3.7a} without the restrictive assumption of positive $\mu$-divergence on $B$ as in \cite[(5.15)]{RZZ12}, we obtain Theorem \ref{uniqueness} just assuming $B$ to be uniformly bounded and to satisfy the monotonicity condition of
Theorem \ref{mono}. This generalizes \cite[Theorem 5.19]{RZZ12}.\\
In Section \ref{sec4} under the conditions \eqref{S1}-\eqref{S3} and the corresponding definition of $\rho$ as a step function of countably many sets 
which are differences of closed and convex sets, we obtain the IBP formula of Proposition \ref{l14}. 
Then with the help of Lemma \ref{l13}, we are able to identify the countably skew reflected OU-process 
in Theorem \ref{t4.4} which is the second main result of this paper, where the occurring local times are normalized according to the interpretation of Remark \ref{r14}. 
We present concrete examples where Theorem \ref{t4.4} can be applied in Examples \ref{exam1}. Finally, in Remark 
\ref{bouzam}, we mention the related paper \cite{BouZ14}, which is to our knowledge the sole other work about skew reflection in infinite dimensions.

\section{Framework and preliminaries}\label{sec2}
Let $H$ be separable real Hilbert space with inner product $\langle \cdot, \cdot \rangle$, norm $|\cdot|$ and Borel $\sigma$-algebra $\mathcal{B}(H)$.\\
Let $A: D(A) \subset H \rightarrow H$ be a linear self-adjoint operator on $H$ such that $A\geq\delta \operatorname{Id}$, i.e. $\langle Ax, x \rangle \geq \delta |x|^2$ for any $x \in D(A)$ and some $\delta>0$.
We further suppose that $A^{-1}$ is of trace class. In particular there exists an orthonormal basis $\{e_j, j \in \mathbb{N} \}$ of $H$ consisting of eigenfunctions for $A$ with corresponding real eigenvalues $\alpha_j, j \in \mathbb{N}$,
i.e. 
\begin{equation*}
 Ae_j = \alpha_je_j, \quad \forall j \in \mathbb{N}\;.
\end{equation*}
Consequently, $\alpha_j \geq \delta$ for any $j\in \mathbb{N}$.\\
We denote by $\mu$ the Gaussian measure on $H$ with mean zero and covariance operator
\begin{equation*}
 Q:= \frac{1}{2} A^{-1}\;.
\end{equation*}
Since $A$ is strictly positive, $\mu$ is nondegenerate and has full support on $(H, \mathcal{B}(H))$. The corresponding $L^p$-spaces, $p\in[1,\infty]$, with the usual norms $\| \cdot\|_p$ are denoted by $L^p(H;\mu)$.
We denote by $D\varphi:H \rightarrow H^{*}(\equiv H)$ the Fr\'{e}chet derivative of $\varphi:H\rightarrow \mathbb{R}$ and let $C^1_b(H)$ be the set of all bounded (Fr\'{e}chet) differentiable functions that have bounded
(Fr\'{e}chet) derivatives.\\
For $K\subset H$ define 
\begin{equation*}
 C^1_b(K):= \left\{ f: K\rightarrow \mathbb{R}\;| \; \; \exists g \in C_b^1(H) \;\textup{ with } \;f=g \textup{ on } K\right\}\;.
\end{equation*}
Moreover, if $\varphi$ is Fr\'{e}chet differentiable, we write 
\begin{equation*}
 \partial_j \varphi := \langle D \varphi, e_j \rangle\;, \quad j \geq1\;.
\end{equation*}
Next, we want to introduce the bounded variation functions in a Gelfand triple from \cite{RZZ12}. For this, we let as in \cite{FH01}
\begin{equation*}
 A_{\frac{1}{2}}(x):= \int_0^x \left( \log (1+s) \right)^{\frac{1}{2}} \mathrm{d} s, \quad x \geq 0
\end{equation*}
and
\begin{equation*}
 L(\log L)^{\frac{1}{2}}(H, \mu):= \left\{ \;f: H \rightarrow \mathbb{R}\; | f \textup{ is } \mathcal{B}(H)\textup{-measurable and } A_{\frac{1}{2}} (|f|) \in L^1(H, \mu) \right\}\;.
\end{equation*}
It is well-known that $L(\log L)^{\frac{1}{2}}(H, \mu)$ is a Banach space with norm 
\begin{equation*}
 \|f\|_{L(\log L)^{\frac{1}{2}}}=\inf \left\{ \alpha >0 \; \Big| \; \int\limits_H A_{\frac{1}{2}} \left( \frac{|f|}{\alpha}\right) \mathrm{d} \mu \leq 1 \right\} \;.
\end{equation*}
Let $(c_j)_{j\in \mathbb{N}}$ be a sequence in $[1, \infty)$ and let 
\begin{equation*}
 H_1:= \left\{ x \in H \;\Big| \;\sum\limits_{j\geq1} c_j^2 \langle x, e_j \rangle^2 < \infty \right\}
\end{equation*}
with inner product
\begin{equation*}
 \langle x, y \rangle_{H_1} := \sum\limits_{j\geq1} c_j^2 \langle x, e_j \rangle \langle y, e_j \rangle \;.
\end{equation*}
Then $(H, \langle \cdot, \cdot \rangle_{H_1})$ is a Hilbert space such that $H_1 \subset H$ continuously and densely. Identifying $H$ with its dual $H^{*}$, we obtain continuous and dense embeddings
\begin{equation*}
 H_1 \subset H( \equiv H^{*} ) \subset H^{*}_1\;.
\end{equation*}
In particular ${ }_{H_1}\langle \; \cdot \;,\; \cdot \;\rangle_{H^{*}_1}$ coincides with $\langle \cdot, \cdot \rangle$ when restricted to $H_1 \times H$ and $( H_1, H, H^{*}_1)$ is a Gelfand triple.
Define a family of $H$- valued functions on $H$ by 
\begin{align*}
 (C_b^1)_{D(A) \cap H_1} := \Bigg\{\; G \; \;  \Bigg| \;G(z)= &\sum\limits_{j=1}^m g_j (z) l^j, \; z \in H, \;\\ 
 &g_j \in C^1_b(H), \;l^j \in D(A) \cap H_1 \Bigg\}
\end{align*}
and let $D^{*}$ be the adjoint of $D: C_b^1(H)\subset L^2(H;\mu) \rightarrow L^2(H,H;\mu)$ with domain 
\begin{align*}
 \operatorname{Dom} (D^{*}) := \Bigg\{  G \in L^2(H,H;\mu)\; |&\; C_b^1(H) \ni u \rightarrow \int\limits_H \langle G, Du \rangle\; \mathrm{d} \mu \\
 &\textup{ is continuous with respect to } \| \cdot \|_2 \Bigg\}\;.
\end{align*}
We have $(C^1_b)_{D(A)\cap H_1} \subset \operatorname{Dom}(D^{*})$ and for any $G \in (C^1_b)_{D(A)\cap H_1}, \; f \in C^1_b(H)$
\begin{equation}\label{1}
 \int\limits_H D^{*} G(z) f(z) \mu (\mathrm{d}z)= \int\limits_H \langle G(z), Df(z) \rangle \mu(\mathrm{d}z)
\end{equation}
For $\rho \in L(\log L)^{\frac{1}{2}}(H, \mu)$ set
\begin{align*}
  V(\rho):= \sup\limits_{G\in(C_b^1)_{D(A) \cap H_1}, |G|_{H_1} \leq 1 } \int\limits_H D^{*}G(z) \rho(z)\; \mu (\mathrm{d}z)\;.
\end{align*}
Then
\begin{equation*}
 \rho\in BV(H,H_1):=\left\{ \rho\in L(\log L)^{\frac{1}{2}}(H, \mu) \textup {   and  } V(\rho) < \infty \right\}
\end{equation*}
is called a $BV$ function in the Gelfand triple $(H_1,H,H^{*}_1)$. Since $D(A) \cap H_1 \subset  D(A)$ and $|\cdot| \leq |\cdot|_{H_1}$ it follows that
$BV(H,H) \subset BV(H,H_1)$. From now on we will assume that $\rho:H \rightarrow \mathbb{R}$ is a $\mathcal{B}(H)$-measurable function that satisfies either 
\begin{equation*}\tag{H1a}\label{2}
 \frac{1}{c_0} \leq \rho \leq c_0 \; \; \mu-\textup{a.e. for some constant } c_0 > 1 
\end{equation*}
or
\begin{equation*}\tag{H1b}\label{3}
 \rho=\mathbb{I}_{\Gamma} \; \textup{ where } \Gamma \subset H \; \textup{ is closed and convex}.
\end{equation*}
Here $\mathbb{I}_A, \; A\subset H$, denotes the indicator function of $A$. The first case will later correspond to (countably) skew reflection and the second to oblique reflection.\\
Throughout, we set 
\begin{equation*}
 F:= \operatorname{supp} (\rho \;\mathrm{d} \mu)\;.
\end{equation*}
Then from \cite[Remark 4.1]{RZZ12}, we know that 
\begin{equation*}
 \mathcal{E}^\rho(u,v):= \frac{1}{2} \int\limits_H \langle Du, Dv \rangle \rho \;\mathrm{d} \mu\;, \quad u,v \in C^1_b(F)\;,
\end{equation*}
is closable in $L^2(F; \rho \; \mathrm{d} \mu)$ and we denote the closure by $( \mathcal{E}^\rho,D( \mathcal{E}^\rho))$.\\
Next, we suppose from now on that there exists a local (possibly non-symmetric) Dirichlet form $(\mathcal{E}, D(\mathcal{E}))$ in the sense of \cite{MR92}, such that for some constant $c_0>1$
\begin{equation*}\tag{H2}\label{H2}
 \frac{1}{c_0}\mathcal{E} (u,u) \leq \mathcal{E}^\rho(u,u)\leq c_0 \mathcal{E}(u,u)\;, \quad \forall u \in C^1_b(F)\;,
\end{equation*}
and that $(\mathcal{E}, D(\mathcal{E}))$ is given as the closure of $(\mathcal{E}, C^1_b(F))$ on $L^2(F; \rho\mathrm{d} \mu)$.
Then the following holds: 
\begin{thm}\label{th1}
 \begin{itemize}
  \item[(i)] $(\mathcal{E}, D(\mathcal{E}))$ and $ ( \mathcal{E}^\rho, D(\mathcal{E}^\rho))$ are quasi-regular 
  local Dirichlet forms in the sense of \cite[IV. Definition 3.1]{MR92} and the $\mathcal{E}$-nests and $\mathcal{E}^\rho$-nests coincide.
  \item[(ii)] $(\mathcal{E}, D(\mathcal{E}))$ is recurrent, hence conservative.
  \item[(iii)] There exists a diffusion $\mathbb{M} = (\Omega, \mathcal{M}, (\mathcal{M}_t)_{t\geq0}, (X_t)_{t\geq0}, (P_z)_{z\in F})$ associated to $(\mathcal{E}, D(\mathcal{E}))$ and $\mathbb{M}$
  has infinite life time.
 \end{itemize}
\end{thm}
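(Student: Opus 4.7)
The plan is to transfer known properties of the symmetric form $\mathcal{E}^\rho$ to the (possibly non-symmetric) form $\mathcal{E}$ by exploiting the equivalence in (\ref{H2}) on the common core $C^1_b(F)$, and then to invoke the general theory of quasi-regular Dirichlet forms from \cite{MR92}.

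The starting observation is that the symmetric part $\widetilde{\mathcal{E}}$ of $\mathcal{E}$ satisfies $\widetilde{\mathcal{E}}(u,u)=\mathcal{E}(u,u)$ on $C^1_b(F)$, so by (\ref{H2}) we have
\[
\tfrac{1}{c_0}\,\widetilde{\mathcal{E}}_1(u,u)\;\leq\;\mathcal{E}^\rho_1(u,u)\;\leq\; c_0\,\widetilde{\mathcal{E}}_1(u,u),\qquad u\in C^1_b(F).
\]
Since both $(\mathcal{E},D(\mathcal{E}))$ and $(\mathcal{E}^\rho,D(\mathcal{E}^\rho))$ are by assumption the closures of the same form on the same Hilbert space $L^2(F;\rho\,d\mu)$, the two domains coincide as sets and carry equivalent graph norms. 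Consequently the associated $1$-capacities are comparable, so $\mathcal{E}$-nests and $\mathcal{E}^\rho$-nests coincide and the two classes of exceptional sets agree.

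For (i), quasi-regularity of $\mathcal{E}^\rho$ is obtained as in \cite{RZZ12}: it is a gradient Dirichlet form with reference measure $\rho\,d\mu$ on the separable Hilbert space $H$, and one verifies the three quasi-regularity conditions of \cite[IV.\,Def.\,3.1]{MR92} using a countable subfamily of $C^1_b(F)$ that separates the points of $F$ together with an $\mathcal{E}^\rho$-nest of compacts built from finite-dimensional cylinder approximations. The coincidence of nests then propagates quasi-regularity to $\mathcal{E}$; the sector condition needed here is already part of the standing hypothesis that $\mathcal{E}$ is a Dirichlet form in the sense of \cite{MR92}. Locality of $\mathcal{E}^\rho$ is immediate from its gradient structure and is assumed for $\mathcal{E}$; it passes to the closure. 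For (ii), one uses that $\mu$, and hence $\rho\,d\mu$, is a finite measure, so the constant $1$ lies in $C^1_b(F)\subset D(\mathcal{E}^\rho)\cap D(\mathcal{E})$ with $\mathcal{E}^\rho(1,1)=\mathcal{E}(1,1)=0$. The stationary sequence $u_n\equiv 1$ then satisfies the standard recurrence criterion (cf.\ \cite{FOT11},\cite{O13}), from which conservativeness of $(\mathcal{E},D(\mathcal{E}))$ follows automatically. For (iii), (i) combined with the general existence theorem for properly associated processes (\cite[IV.3.5]{MR92}) yields a special standard process $\mathbb{M}$; locality of $\mathcal{E}$ upgrades it to a diffusion, and conservativeness from (ii) guarantees infinite life time.

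The main obstacle is part (i): the transfer of quasi-regularity along an equivalence that is a priori only given on the core, and the compatibility of the sector condition for $\mathcal{E}$ with the closure procedure. Once the identification $D(\mathcal{E})=D(\mathcal{E}^\rho)$ with equivalent norms and the nest coincidence are in place, all the remaining points become essentially automatic consequences of the abstract theory.
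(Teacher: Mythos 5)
Your proposal is correct and follows essentially the same route as the paper: equivalence of the Dirichlet norms via (\ref{H2}) on the common core yields coincidence of domains, nests and exceptional sets, quasi-regularity is imported from the symmetric gradient form (the paper invokes the tightness result of R\"ockner--Schmuland for this), recurrence comes from $\mathcal{E}(1,1)=0$, and existence of an associated conservative diffusion then follows from the general theory in \cite{MR92}. No substantive differences.
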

\begin{proof}
 \begin{itemize}
  \item[(i)] This follows from \cite{RS92} and \cite[III. Definition 2.1]{MR92} since  the Dirichlet norms of $(\mathcal{E},D(\mathcal{E}))$ and $(\mathcal{E}^\rho,D(\mathcal{E}^\rho))$ 
  are equivalent and so the $\mathcal{E}$-nests and $\mathcal{E}^\rho$-nests as well as the $\mathcal{E}$-exceptional and 
  $\mathcal{E}^\rho$-exceptional sets coincide.
  \item[(ii)] This follows immediately from the fact that $\mathcal{E}(1,1)=0$.
  \item[(iii)] This follows from \cite[IV. Theorem 1.5]{MR92} and the conservativeness of $(\mathcal{E}, D(\mathcal{E}))$.
 \end{itemize}
\end{proof}
For the following notions and facts up to \eqref{6} below, we refer to \cite[Section VI.2]{MR92}. \\
Let $\mathcal{A}_{+}$ denote the set of all positive continuous additive functionals (PCAF's for short) of $\mathbb{M}$ as in Theorem \ref{th1} (iii) and set $\mathcal{A}:= \mathcal{A}_{+}-\mathcal{A}_{+}$,  
i.e. the set of all the CAF's of finite variation. Denote by $E_x$ the expectation with respect to $P_x$. \\
By virtue of the Revuz correspondence each $\mathcal{E}$-smooth measure $\nu \;( \nu \in S \textup{ for short})$ on $(F, \mathcal{B}(F))$ is associated to a unique $(C_t)_{t\geq0} \in \mathcal{A}_{+}$ such that
\begin{equation}\label{5}
 \lim\limits_{t\searrow0} \frac{1}{t} \int\limits_F E_x\left[ \int_0^t f(X_s) \mathrm{d} C_s \right] \;\rho(x) \mu(\mathrm{d}x) = \int\limits_F f \; \mathrm{d} \nu 
\end{equation}
for any positive $\mathcal{B}(F)$-measurable function $f: F \rightarrow \mathbb{R}$ and conversely any $(C_t)_{t\geq0}\in \mathcal{A}_{+}$ defines a unique $\nu \in S$ such that \eqref{5} holds. 
In this case, we write $\nu = \nu_C$ and $C=C^\nu$.\\
Likewise every $(C_t)_{t\geq0} \in \mathcal{A}$ corresponds to a unique $\nu \in S-S$ which is given by $\nu = \nu_{C^1}-\nu_{C^2}$ in case $C_t=C_t^1-C_t^2, \; t\geq0$, and each $\nu \in S-S$ corresponds 
to a unique $(C_t)_{t\geq0} \in \mathcal{A}$ which is given by $C_t=C_t^1-C_t^2, \; t\geq0$, in case $\nu = \nu_{C^1}-\nu_{C^2}$. 
\begin{remark}\label{r2}
\begin{itemize}
 \item[(i)] Note that by Theorem \ref{th1} (i) the class of $\mathcal{E}^\rho$-smooth measures coincides with the class of $\mathcal{E}$-smooth measures S. Thus for any $\mathcal{E}^\rho$-smooth measure $\nu$ there exists a unique $(C_t)_{t\geq0} \in \mathcal{A}_{+}$
 with $\nu=\nu_C$, i.e. \eqref{5} holds.
 \item[(ii)] From \eqref{5} and the uniqueness of the Revuz correspondence we directly see that if $\nu \in S$ is associated to $(C_t)_{t\geq0} \in \mathcal{A}_{+}$ and if $g$ is $\nu$-integrable, then
 \begin{equation*}
  \left( \int_0^t g(X_s) \mathrm{d} C_s \right)_{t\geq0} \quad \textup{ is associated to } g \mathrm{d} \nu\;.
 \end{equation*}

 \end{itemize}
\end{remark}
For each $l\in H$ the function $l(z):= \langle l,z \rangle$ belongs to $D(\mathcal{E})=D(\mathcal{E}^\rho)$ since $l(\cdot) \in L^2(\Gamma, \rho\,\mathrm{d} \mu)$. Therefore the 
$AF \;\langle l, X_t-X_0 \rangle$ admits a unique decomposition into a martingale $AF \; (M_t^{[l]})_{t\geq 0}$ of finite energy (in short $M^{[l]} \in \mathcal{\mathring{M}})$ and a CAF $(N_t^{[l]})_{t\geq0}$ of zero energy
(in short $N^{[l]}\in \mathcal{N}_c)$ such that for $\mathcal{E}$-q.e. $z \in F$ 
\begin{equation}\label{6}
 \langle l, X_t-X_0 \rangle = M^{[l]}_t+N^{[l]}_t\;, \quad t\geq 0, \; P_z-\textup{a.s.}
\end{equation}
The following identication of $N^{[l]}$ can be derived as in the finite dimensional 
case (cf. the more general Theorem 2.2 in \cite{RZZ12} and \cite[Theorem 4.5]{Tr03} where 
Proposition \ref{p3}(1) is even shown in the non-sectorial case). 
\begin{prop}\label{p3}
 \begin{itemize}
  \item[(1)] Suppose that $\nu= \nu_1- \nu_2, \; \nu_i \in S$ and finite, $i=1,2$, and 
  \begin{equation*}
    - \mathcal{E}(l(\cdot), v ) = \int\limits_\Gamma v(z) \nu\; (\mathrm{d}z) \quad \forall v \in C_b^1(F)\;.
  \end{equation*}
Then \begin{itemize}
\item[(i)] $ \int\limits_F E_x [N^{[l]}_t] \rho(x)\; \mu (\mathrm{d} x) <\infty, \quad \forall t \geq0 $
\item[(ii)] $N^{[l]}_t = C_t^{\nu_1}-C_t^{\nu_2}$
 \end{itemize}
\item[(2)] The quadratic variation of $M^{[l]}$ satisfies 
\begin{equation*}
 \langle M^{[l]} \rangle_t = t |l|^2\;, \quad t\geq0\;.
\end{equation*}
\end{itemize}
\end{prop}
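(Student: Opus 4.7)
The strategy is to specialize the general Fukushima-type decomposition theory for (possibly non-symmetric) quasi-regular Dirichlet forms to the linear coordinate function $l(z):=\langle l,z\rangle$, which lies in $D(\mathcal{E})=D(\mathcal{E}^\rho)$ since $l(\cdot)\in L^2(F,\rho\,\mathrm{d}\mu)$ and the Dirichlet norms are equivalent by (\ref{H2}). The two parts then correspond to identifying the drift $N^{[l]}$ via the Revuz correspondence and computing the quadratic variation of $M^{[l]}$ via its energy measure.

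For part (1), the hypothesis $-\mathcal{E}(l(\cdot),v)=\int v\,\mathrm{d}\nu$ for all $v\in C_b^1(F)$ identifies $l(\cdot)$ as an element of the extended generator domain with ``image'' given by the signed smooth measure $\nu=\nu_1-\nu_2$. I would then invoke the sectorial version of the Fukushima decomposition in the form of Theorem~2.2 of \cite{RZZ12} (equivalently Theorem~4.5 of \cite{Tr03}): whenever $-\mathcal{E}(u,\cdot)$ is representable by a finite signed smooth measure, the associated zero-energy CAF in the decomposition (\ref{6}) is in fact of finite variation and coincides, under the Revuz correspondence, with the difference of the PCAFs associated to the positive and negative parts. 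Applied to $u=l(\cdot)$ this gives (ii) directly. The integrability (i) then follows, since for each $\nu_i\in S$ with $\nu_i(F)<\infty$ the Revuz formula (\ref{5}) combined with additivity in $t$ yields
\begin{equation*}
\int_F E_x\!\left[C_t^{\nu_i}\right]\rho(x)\,\mu(\mathrm{d}x) \le t\,\nu_i(F),
\end{equation*}
so that $\int_F E_x[|N_t^{[l]}|]\,\rho(x)\,\mu(\mathrm{d}x)\le t(\nu_1(F)+\nu_2(F))<\infty$.

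For part (2), I would identify the Revuz measure of the PCAF $\langle M^{[l]}\rangle$. By the general theory this is the energy measure of $l(\cdot)$ with respect to the symmetric part of $\mathcal{E}$, which on the core $C_b^1(F)$ reduces to $\mathcal{E}^\rho$. Since the Fr\'echet derivative of a linear functional satisfies $Dl(z)\equiv l$ as an element of $H$, a direct computation from the definition of the energy measure shows it equals $|l|^2\rho\,\mathrm{d}\mu$. On the other hand, conservativeness (Theorem~\ref{th1}(ii)) together with invariance of $\rho\,\mathrm{d}\mu$ implies via (\ref{5}) that the deterministic CAF $C_t=t$ has Revuz measure $\rho\,\mathrm{d}\mu$. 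Uniqueness of the Revuz correspondence then forces $\langle M^{[l]}\rangle_t=|l|^2 t$.

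The main obstacle is the finite-variation statement underlying (ii): upgrading ``$N^{[l]}$ is a zero-energy CAF'' to ``$N^{[l]}$ is of bounded variation with explicit Revuz measure'' is the delicate step and genuinely uses both the smooth-measure representation hypothesis and the sectorial/quasi-regular structure established in Theorem~\ref{th1}. The energy-measure computation in (2), by contrast, is essentially a one-line consequence of the constancy of $Dl$.
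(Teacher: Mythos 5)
Your argument is correct and follows essentially the same route as the paper, which likewise reduces part (1) to the sectorial Fukushima-type decomposition of \cite[Theorem 2.2]{RZZ12} (resp. \cite[Theorem 4.5]{Tr03}) and, for part (2), identifies the Revuz measure of $\langle M^{[l]}\rangle$ as $|Dl|^2\rho\,\mathrm{d}\mu=|l|^2\rho\,\mathrm{d}\mu$ and concludes by uniqueness of the Revuz correspondence (exactly as done explicitly in the proof of Proposition \ref{p3.4}). The only point worth noting is that your reduction of the energy measure to $\mathcal{E}^\rho$ uses equality of the symmetric part of $\mathcal{E}$ with $\mathcal{E}^\rho$ on the diagonal rather than the mere equivalence in (\ref{H2}); this equality does hold in both instances where the proposition is applied ($\mathcal{E}^{\Gamma,\bar{A}}(u,u)=\mathcal{E}^\Gamma(u,u)$ and $\mathcal{E}=\mathcal{E}^\rho$).
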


\subsection{The integration by parts formula for $\rho=\mathbb{I}_{\Gamma}$}\label{s2.1}
In this subsection, we fix according to \eqref{3} 
\begin{equation*}
  \rho = \mathbb{I}_\Gamma\;,
\end{equation*}
where $\Gamma \subset H$ is closed and convex and suppose that 
\begin{equation*}
 \rho \in BV(H,H_1).
\end{equation*}
For short we denote the quasi-regular Dirichlet form $(\mathcal{E}^{\mathbb{I}_\Gamma}, D(\mathcal{E}^{\mathbb{I}_\Gamma}))$ of Theorem \ref{th1} (i) by $(\mathcal{E}^\Gamma, D(\mathcal{E}^\Gamma))$. Then the following is known from
\cite[Theorem 3.1 and 4.2, Remark 4.1]{RZZ12}:

There exists a (unique) positive finite measure $\|\partial \Gamma \|$ on $(\Gamma, \mathcal{B}(\Gamma))$ where $\mathcal{B}(\Gamma)$ is the trace $\sigma$-algebra of $\mathcal{B}(H)$ on $\Gamma$ 
and a $B(H)$-measurable map $\eta_\Gamma : H \rightarrow H^{*}_1$ (unique $\| \partial \Gamma \|$-a.e. on $\partial \Gamma$) such that $| \eta_\Gamma (z) |_{H^*_1}= 1 \; \; \|\partial \Gamma \|$-a.e. $ z \in \partial \Gamma$,
$\|\partial \Gamma \|(\Gamma) = V(\mathbb{I}_{\Gamma})$ and for any $G \in (C_b^1)_{D(A)\cap H_1}$
\begin{equation}\label{7}
  \int\limits_\Gamma D^* G(z) \;\mu (\mathrm{d} z) = - \int\limits_{\partial \Gamma } \;{ }_{H_1}\langle \; G(z) \;,\; \eta_\Gamma(z) \;\rangle_{H^{*}_1} \;\| \partial \Gamma \|\;(\mathrm{d}z).
\end{equation}
Furthermore, $\|\partial \Gamma \|$ is $\mathcal{E}^\Gamma$-smooth and supported by $\partial \Gamma$.\\
The process associated with $(\mathcal{E}^{\Gamma}, D(\mathcal{E}^\Gamma))$ which is the normally reflected infinite dimensional Ornstein-Uhlenbeck (OU for short) process has been studied in \cite{RZZ12}.
Here on the basis of \eqref{7}, we intend to construct an obliquely reflected and (countably) skew reflected OU-process. In order to do so, we will extensively use the integration by parts formula \eqref{9}
below that involves a generalized Gauss formula:\\
Indeed for $g \in C^1_b(H)$ and $l \in D(A) \cap H_1$, we get from \eqref{1} and the well-known formula for the logarithmic derivative of $\mu$ that
\begin{equation}\label{8}
 D^*(g\cdot l)(z) = - \langle l, Dg(z) \rangle + 2 g(z) \langle Al, z \rangle\;, \quad z \in H
\end{equation}
and inserting \eqref{8} into \eqref{7} gives
\begin{align}\label{9}
 -\mathcal{E}^\Gamma ( l(\cdot), g ) &= - \frac{1}{2} \int\limits_\Gamma \langle l, D g(z) \rangle \; \mu( \mathrm{d} z)\\
 &= - \int\limits_\Gamma g(z) \langle Al, z \rangle \; \mu( \mathrm{d}z) -\int\limits_{\partial \Gamma} g(z)\; { }_{H_1}\langle \; l \;,\; \eta_\Gamma(z) \;\rangle_{H^{*}_1}\;\frac{\|\partial \Gamma \|}{2}(\mathrm{d} z) \notag
\end{align}

\section{The obliquely reflected OU-process}\label{sec3}
In this section, we fix $\rho=\mathbb{I}_\Gamma$ as in Subsection \ref{s2.1} and assume additionally 
 $$\Gamma \text{ is bounded and }\;\mathbb{I}_\Gamma \in BV(H,H)\,.$$
Let $\mathcal{L}_\infty (H):= L(H,H)$ denote the space of bounded linear operators on $H$ and let $\|\cdot \|$ denote the operator norm. Let 
$\breve{A}: \Gamma \rightarrow \mathcal{L}_\infty(H)$ be a map such that
\begin{equation*}\tag{{\it 0}1}
  \sup\limits_{x \in \Gamma}\|\breve{A} (x) \|_{\textup{Fro}} < \infty\;,
\end{equation*}
where \begin{equation*}
       \| \breve{A}(z)\|^2_{\textup{Fro}}:= \sum\limits_{i,j \geq1} | \breve{a}_{ij}(z)|^2, \quad z \in \Gamma\,,
      \end{equation*}
and $\breve{a}_{ij}: \Gamma \rightarrow \mathbb{R}\,, i,j\geq1$, are the uniquely determined functions such that 
\begin{equation*}
 \breve{A}(\cdot) e_j = \sum\limits_{i\geq 1} \breve{a}_{ij}(\cdot) e_i \quad \textup{ for any } j\geq 1\;.
\end{equation*}
Note that $\| \breve{A}(\cdot)\| \leq \|\breve{A}(\cdot)\|_{\textup{Fro}}$ everywhere on $H$, thus 
\begin{equation*}
 \| \breve{A}(\cdot)\|_{L^\infty (\Gamma; \mu)} \leq \sup\limits_{x\in \Gamma} \|\breve{A}(x)\|_{\textup{Fro}}\,.
\end{equation*}
We suppose that $\breve{A}=(\breve{a}_{ij})_{i,j\geq1}$ is fully antisymmetric, that is 
\begin{equation*}\tag{{\it 0}2}
 \breve{a}_{ij} = - \breve{a}_{ji} \quad \textup{ for any } i,j\geq1\;.
\end{equation*}
We further assume
\begin{itemize}
 \item[(i)] $\breve{a}_{ij} \in C^1_b(\Gamma)$ for any $i,j\geq1\;.$ $\hspace{37.5ex} ({\it0}3)$
 \item[(ii)] $\beta^{\mu, \breve{A}}:= \sum\limits_{j\geq1} \beta_{e_j}^{\mu, \breve{A}}$ with 
 \begin{equation*}
  \beta_{e_j}^{\mu, \breve{A}}:= \sum\limits_{i\geq1} \left( \frac{\breve{a}_{ij}}{2} \beta^\mu_{e_i} + \frac{\partial_i \breve{a}_{ij}}{2}\right) \cdot e_j\;, \quad j\geq1,
 \end{equation*}
and
\begin{equation*}
 \beta^\mu_{e_i} := - 2\; \langle A e_i, \cdot \;\rangle\;, \quad i\geq1,
\end{equation*}
satisfies
\begin{equation*}
 \mu\textup{- ess}\sup\limits_{x\in \Gamma} \sum_{j\geq1} \left( \sum_{i\geq1} \left| \frac{\breve{a}_{ij}(x)}{2}\beta^\mu_{e_i}(x) + \frac{\partial_i\breve{a}_{ij}(x)}{2}\right|\right)^2 < \infty\,,
\end{equation*}
i.e.
\begin{equation*}
 \beta^{\mu,\breve{A}} \in L^\infty (\Gamma, H; \mu)\;.
\end{equation*}
\end{itemize}
\begin{remark}\label{r7}
 Since the components $\beta^{\mu}_{e_i},\, i \geq 1$, are linear in $x \in \Gamma$, it may be impossible to obtain $\beta^{\mu, \breve{A}} \in L^\infty(\Gamma, H; \mu)$ if $ \Gamma$ is unbounded. 
 That is the sole reason why we assume $\Gamma$ to be bounded at the beginning of Section \ref{sec3}. 
\end{remark}

Define 
\begin{equation*}
 \mathcal{E}^{\Gamma, \breve{A}}(u,v):= \frac{1}{2} \int\limits_{\Gamma} \; \langle \breve{A} (Du), Dv \rangle\; \mathrm{d} \mu\;, \quad u,v \in C^1_b(\Gamma),
\end{equation*}
and for 
\begin{equation*}
 \bar{A}:= \breve{A} + \textup{Id},
\end{equation*}
Id being the identity operator on $H$, let
\begin{equation*}
 \mathcal{E}^{\Gamma, \bar{A}}(u,v):= \frac{1}{2} \int\limits_{\Gamma} \; \langle \bar{A} (Du), Dv \rangle\; \mathrm{d} \mu, \quad u,v \in C^1_b(\Gamma)\;.
\end{equation*}
Then it is easy to see that for any $u,v \in C^1_b(\Gamma)$
\begin{equation*}
 \mathcal{E}^{\Gamma, \breve{A}}(u,u) = 0, \quad \textup{hence} \quad\mathcal{E}^{\Gamma, \bar{A}}(u,u) = \mathcal{E}^\Gamma(u,u)\;,
\end{equation*}
and
\begin{equation*}
 \left| \mathcal{E}^{\Gamma,\bar{A}} (u,v) \right| \leq \left( \|\breve{A}(\cdot)\|_{L^\infty(\Gamma;\mu)} + 1 \right) \left( \mathcal{E}^{\Gamma, \bar{A}}(u,u) \right)^{\frac{1}{2}}\left( \mathcal{E}^{\Gamma, \bar{A}}(v,v)\right)^{\frac{1}{2}}
\end{equation*}
These observations lead to the following:
\begin{lemma}\label{l4}
 $( \mathcal{E}^{\Gamma, \bar{A}}, C_b^1(F))$ is closable in $L^2(F; \mu)$ and the closure $(\mathcal{E}^{\Gamma,\bar{A}}, D(\mathcal{E}^{\Gamma, \bar{A}}))$ is a quasi-regular local conservative (non-symmetric) Dirichlet form on $L^2(F; \mu)$ 
 whose norm is equal to the norm of $(\mathcal{E}^\Gamma, D(\mathcal{E}^\Gamma))$, i.e.
 \begin{equation*}
  \|u\|_{D(\mathcal{E}^\Gamma)} = \| u \|_{D(\mathcal{E}^{\Gamma, \bar{A}})}\;, \quad \forall u \in D(\mathcal{E}^\Gamma)=D(\mathcal{E}^{\Gamma,\bar{A}})
 \end{equation*}
where for any Dirichlet form $(\mathcal{E}, D(\mathcal{E}))$ on some $L^2$-space with inner product $(\cdot,\cdot)_{L^2}$, we set 
\begin{align*}
 \mathcal{E}_\alpha (u,v):=\mathcal{E}(u,v) + \alpha (u,v)_{L^2}\;, \quad u,v \in D(\mathcal{E}), \alpha>0\;,
\end{align*}
and
\begin{equation*}
 \|u\|_{D(\mathcal{E})} := \mathcal{E}_1(u,u)^{\frac{1}{2}}\;.
\end{equation*}
\end{lemma}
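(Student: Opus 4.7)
The argument turns on the identity $\mathcal{E}^{\Gamma,\bar A}(u,u)=\mathcal{E}^\Gamma(u,u)$ for $u\in C_b^1(F)$, together with the weak sector estimate stated just before the lemma. For closability, if $(u_n)\subset C_b^1(F)$ is $\mathcal{E}^{\Gamma,\bar A}$-Cauchy with $u_n\to 0$ in $L^2(F;\mu)$, the diagonal identity makes $(u_n)$ $\mathcal{E}^\Gamma$-Cauchy, so closability of $(\mathcal{E}^\Gamma,C_b^1(F))$ from Theorem~\ref{th1} forces $\mathcal{E}^\Gamma(u_n,u_n)\to 0$, hence $\mathcal{E}^{\Gamma,\bar A}(u_n,u_n)\to 0$. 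The same observation yields $D(\mathcal{E}^{\Gamma,\bar A})=D(\mathcal{E}^\Gamma)$ and equality of the $\mathcal{E}_1$-norms on this common domain, since the two notions of $\mathcal{E}_1$-Cauchy sequence agree on the dense core $C_b^1(F)$.

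For the Markov property, I first treat $u\in C_b^1(F)$. The chain rule gives $D(u^+\wedge 1)=\mathbb{I}_{\{0<u<1\}}Du$, so the integrand of $\mathcal{E}^{\Gamma,\breve A}(u,u^+\wedge 1)$ equals $\mathbb{I}_{\{0<u<1\}}\langle\breve A\,Du,Du\rangle$, which vanishes pointwise by antisymmetry of $\breve A$; the same applies to $\mathcal{E}^{\Gamma,\breve A}(u^+\wedge 1,u)$. Hence
\begin{equation*}
\mathcal{E}^{\Gamma,\bar A}(u\pm u^+\wedge 1,\;u\mp u^+\wedge 1)\;=\;\mathcal{E}^\Gamma(u,u)-\mathcal{E}^\Gamma(u^+\wedge 1,u^+\wedge 1)\;\geq\;0
\end{equation*}
by the Markov property of the symmetric Dirichlet form $\mathcal{E}^\Gamma$. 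I then extend to arbitrary $u\in D(\mathcal{E}^{\Gamma,\bar A})$ by approximation: choose $u_n\in C_b^1(F)$ with $u_n\to u$ in the common $\mathcal{E}_1$-norm, invoke the standard $\mathcal{E}_1^\Gamma$-continuity of the unit contraction to obtain $u_n^+\wedge 1\to u^+\wedge 1$, and pass to the limit in both inequalities using the continuity of the bilinear form $\mathcal{E}^{\Gamma,\bar A}$ provided by the sector estimate.

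Quasi-regularity is inherited from $(\mathcal{E}^\Gamma,D(\mathcal{E}^\Gamma))$ via Theorem~\ref{th1}, because the three defining properties in \cite[IV.~Definition~3.1]{MR92} depend only on the Hilbert-space structure of the domain under the $\mathcal{E}_1$-norm, which is identical for the two forms; in particular the $\mathcal{E}^\Gamma$- and $\mathcal{E}^{\Gamma,\bar A}$-nests, exceptional sets, and quasi-continuous modifications coincide. Locality is immediate on $C_b^1(F)$—the integrand $\langle\bar A\,Du,Dv\rangle$ vanishes pointwise when $u,v$ have disjoint supports—and extends to the closure by approximation; conservativeness reduces to the trivial $\mathcal{E}^{\Gamma,\bar A}(1,1)=0$. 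I expect the main technical point to be the extension of the Markov property to the full domain, where the non-symmetric cross-term $\mathcal{E}^{\Gamma,\breve A}(u,u^+\wedge 1)$ must be controlled in the limit; the combination of norm equality with the sector condition is precisely what makes this limit stable.
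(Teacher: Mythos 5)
Your argument is correct and is essentially the paper's proof: the only non-obvious point is the sub-Markov property, which the paper likewise reduces to the contraction criterion of \cite[I. Theorem 4.4]{MR92} checked on the dense core $C^1_b(F)$, where the antisymmetry of $\breve{A}$ kills the cross terms because $D(u^+\wedge 1)$ is a pointwise scalar multiple of $Du$. The remaining assertions (closability, norm equality, quasi-regularity, locality, conservativeness) are treated by both you and the paper as immediate consequences of the diagonal identity $\mathcal{E}^{\Gamma,\bar{A}}(u,u)=\mathcal{E}^{\Gamma}(u,u)$ and the weak sector estimate.
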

\begin{proof}
 The sole property of $(\mathcal{E}^{\Gamma, \bar{A}}, D(\mathcal{E}^{\Gamma, \bar{A}}))$ that is not evident from the definitions is the submarkov property.
 But this follows directly from the fact that 
 \begin{equation*}
  \mathcal{E}^{\Gamma, \bar{A}}(u\wedge\alpha, u-u\wedge\alpha) \geq 0 \quad\quad \forall u \in C^1_b(F), \alpha>0,
 \end{equation*}
by \cite[I. Theorem 4.4]{MR92} and denseness of $C^1_b(F)$.
\end{proof}
In view of Lemma \ref{l4}, $(\mathcal{E}^{\Gamma, \bar{A}}, D(\mathcal{E}^{\Gamma, \bar{A}}))$ satisfies the assumptions of $(\mathcal{E}, D(\mathcal{E}))$ with respect to $(\mathcal{E}^\Gamma, D(\mathcal{E}^\Gamma)):=(\mathcal{E}^\rho,D(\mathcal{E}^\rho))$
in Section 2. We may therefore set
\begin{equation*}
                      (\mathcal{E}, D(\mathcal{E})):= (\mathcal{E}^{\Gamma, \bar{A}}, D(\mathcal{E}^{\Gamma, \bar{A}}))\;.
\end{equation*}
\begin{lemma}\label{l5}
 Let $l \in D(A) $ and $g \in C^1_b(H)$. Then 
 \begin{equation}
  -\mathcal{E}^{\Gamma, \breve{A}} (l(\cdot),g) = \int\limits_{\partial\Gamma }  \langle l, \breve{A}\eta_\Gamma \rangle g\; \frac{\|\partial \Gamma\|}{2}+ \int\limits_{\Gamma} \; \langle l, \beta^{\mu, \breve{A}} \rangle g \;\mathrm{d} \mu\;,
 \end{equation}
 where $\breve{A} \eta_\Gamma= \sum\limits_{i,j\geq1} \breve{a}_{ij}\langle  e_j, \eta_\Gamma \rangle \; e_i$.
\end{lemma}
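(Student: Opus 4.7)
\medskip

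\noindent My plan is to unfold $\mathcal{E}^{\Gamma,\breve{A}}(l(\cdot),g)$ componentwise in the eigenbasis $\{e_j\}_{j\geq 1}$, apply the generalized Gauss formula \eqref{7} to each scalar piece, and reassemble the boundary term using the full antisymmetry $({\it 0}2)$.

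Since $l(\cdot)=\langle l,\cdot\rangle$ is linear, $Dl(\cdot)=l$ is constant, and the expansion $\breve{A}(z)l=\sum_{i,j\geq 1}\breve{a}_{ij}(z)\langle l,e_j\rangle e_i$ turns the bilinear form directly into a double sum of integrals of the form $\int_\Gamma\breve{a}_{ij}\,\partial_ig\,\mathrm{d}\mu$. For each fixed pair $(i,j)$ the strategy is: pick a $C^1_b(H)$-extension of $\breve{a}_{ij}$ (granted by $({\it 0}3)$(i)), choose as test field $G_{ij}(z):=g(z)\breve{a}_{ij}(z)\,e_i$, which lies in $(C_b^1)_{D(A)\cap H_1}$ since $e_i\in D(A)\cap H_1$, compute $D^\ast G_{ij}$ via \eqref{8} and insert the result into the Gauss formula \eqref{7}. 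Here the additional hypothesis $\mathbb{I}_\Gamma\in BV(H,H)$ imposed at the start of Section \ref{sec3} ensures that $\eta_\Gamma$ is $H$-valued, so the pairing $\langle e_i,\eta_\Gamma\rangle$ is an ordinary Hilbert-space inner product. This produces, for each pair $(i,j)$, the single-pair identity
$$\int_\Gamma \breve{a}_{ij}\,\partial_i g\,\mathrm{d}\mu=-\int_\Gamma g\bigl(\partial_i\breve{a}_{ij}+\breve{a}_{ij}\beta^\mu_{e_i}\bigr)\mathrm{d}\mu+\int_{\partial\Gamma}g\,\breve{a}_{ij}\,\langle e_i,\eta_\Gamma\rangle\,\|\partial\Gamma\|.$$

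Multiplying by $\tfrac12\langle l,e_j\rangle$ and summing over $i,j$, the LHS recovers $\mathcal{E}^{\Gamma,\breve{A}}(l(\cdot),g)$. On the RHS the volume part collapses, by the very definition of $\beta^{\mu,\breve{A}}$ in $({\it 0}3)$(ii), to $-\int_\Gamma g\langle l,\beta^{\mu,\breve{A}}\rangle\,\mathrm{d}\mu$. For the boundary part, the full antisymmetry $({\it 0}2)$ is the decisive input: relabelling $i\leftrightarrow j$ and using $\breve{a}_{ji}=-\breve{a}_{ij}$ shows $\sum_{i,j}\langle l,e_j\rangle\breve{a}_{ij}\langle e_i,\eta_\Gamma\rangle=-\langle l,\breve{A}\eta_\Gamma\rangle$. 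Rearranging signs yields exactly the stated identity.

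The main technical obstacle will be the interchange of the double sums with the integrals. The hypotheses $({\it 0}1)$--$({\it 0}3)$ are engineered for this: the Frobenius bound $({\it 0}1)$ dominates $\breve{A}(z)l$ uniformly by $\|\breve{A}(z)\|_{\textup{Fro}}|l|$, the $L^\infty$-bound on $\beta^{\mu,\breve{A}}$ in $({\it 0}3)$(ii) dominates the volume sum, and $\mathbb{I}_\Gamma\in BV(H,H)$ makes $\eta_\Gamma\in H$, which combined with $({\it 0}1)$ controls the boundary sum by Cauchy--Schwarz $\|\partial\Gamma\|$-a.e. A clean way to make this rigorous is to first establish the identity for $l$ in the finite spans $\mathrm{span}\{e_1,\ldots,e_n\}$, where all sums are finite and every step reduces to a classical IBP, and then pass to the limit $n\to\infty$ using the above uniform bounds together with the boundedness of $g$ and $Dg$ and the finiteness of $\|\partial\Gamma\|$.
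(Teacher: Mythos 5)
Your proposal is correct and follows essentially the same route as the paper's proof: expand $\breve{A}(z)l$ componentwise in the eigenbasis, apply the product rule and the integration by parts formula \eqref{7}--\eqref{8} (equivalently \eqref{9}) to each pair $(i,j)$ with the test field $g\,\breve{a}_{ij}\,e_i$, and reassemble the volume term into $\beta^{\mu,\breve{A}}$ and the boundary term into $\langle l,\breve{A}\eta_\Gamma\rangle$ via the antisymmetry $({\it 0}2)$. Your extra care about justifying the interchange of the double sum with the integrals (finite spans plus the uniform bounds from $({\it 0}1)$--$({\it 0}3)$) only makes explicit what the paper compresses into ``by Lebesgue''.
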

\begin{proof}
 We have
 \begin{equation*}
  \mathcal{E}^{\Gamma, \breve{A}} ( l(\cdot),g) = \frac{1}{2} \int\limits_{\Gamma} \langle \breve{A}(z)l, Dg(z) \rangle \; \mu(\mathrm{d} z)
 \end{equation*}
and 
\begin{equation*}
 \breve{A}(z) l = \sum\limits_{i,j\geq1} \breve{a}_{ij}(z) \langle l,e_j \rangle e_i\;.
\end{equation*}
Thus by Lebesgue
\begin{equation*}
 \mathcal{E}^{\Gamma,\breve{A}}(l(\cdot),g) = \sum\limits_{i,j\geq1} \frac{1}{2} \int\limits_{\Gamma} \breve{a}_{ij}(z) \langle l,e_j \rangle\; \partial_i g(z) \mu(\mathrm{d}z)\;.
\end{equation*}
Noting that
\begin{equation*}
 \breve{a}_{ij} \langle l, e_j \rangle \partial_i g = \partial_i ( \breve{a}_{ij} \langle l,e_j \rangle g ) - \langle l,e_j\rangle g \partial_i \breve{a}_{ij}
\end{equation*}
and that $\breve{a}_{ij} \langle l,e_j \rangle g \in C^1_b(F)$ we get from \eqref{9} 
\begin{align*}
& \mathcal{E}^{\Gamma, \breve{A}}(l(\cdot),g) 
 = \sum\limits_{i,j\geq1}\left( \frac{1}{2} \int\limits_{\Gamma}  \partial_i( \breve{a}_{ij} \langle l,e_j \rangle g ) \; \mathrm{d}\mu - \frac{1}{2} \int\limits_{\Gamma} \langle l,e_j \rangle g \partial_i \breve{a}_{ij}\; \mathrm{d}\mu \right)\\
&= \sum\limits_{i,j \geq1} \left(\int\limits_{\Gamma} \breve{a}_{ij} g\langle l,e_j \rangle \,\langle A e_i,z \rangle \; \mu(\mathrm{d}z) -\frac{1}{2} \int\limits_{\Gamma} \langle l, e_j \rangle g \partial_i \breve{a}_{ij} \;  \mathrm{d}\mu\right)\\
&\; \;\;\;\;\;\quad\;\;+ \sum\limits_{i,j\geq1}\; \int\limits_{ \partial \Gamma } \breve{a}_{ij} g \langle l, e_j \rangle \;\langle \; e_i \;,\; \eta_\Gamma(z) \;\rangle \frac{\| \partial \Gamma \|}{2}( \mathrm{d}z)\\
&= -\int\limits_{\partial \Gamma } \langle l, \breve{A} \eta_\Gamma \rangle g \frac{\|\partial \Gamma\|}{2}(\mathrm{d}z)\\
&\;\; \;\;+ \int\limits_{\Gamma} \left\langle \sum_{j\geq1}\left( \sum_{i\geq1}\left( \breve{a}_{ij} \frac{\langle Ae_i,z \rangle \cdot 2}{2}- \frac{\partial_i\breve{a}_{ij}}{2} \right) \cdot e_j\right),l \right\rangle g \; \mu(\mathrm{d}z)\;.
\end{align*}

\end{proof}
Combining Lemma \ref{l5} and \eqref{9}, we get:
\begin{prop}\label{p6}
 Let $l \in D(A) $ and $g \in C_b^1(H)$. Then 
 \begin{align*}
  -\mathcal{E}^{\Gamma, \bar{A}}(l(\cdot),g)&= \int\limits_{\Gamma} g \langle l, \beta^{\mu, \breve{A}} \rangle \; \mathrm{d}\mu  - \int\limits_\Gamma g(z) \langle Al,z \rangle \mu (\mathrm{d} z)\\
 &+ \int\limits_{\partial \Gamma } g\; \langle  l, \bar{A}^{*}\nu_\Gamma \rangle\; \frac{\|\partial \Gamma \|}{2}\;, 
 \end{align*}
where $\bar{A}^{*}:=\breve{A}^{*} + \operatorname{Id}$, $\nu_\Gamma:= - \eta_\Gamma$ is the inward normal and 
$\bar{A}^{*}, \breve{A}^{*}$ denotes the transposed matrix of $\bar{A}, \breve{A}$.
\end{prop}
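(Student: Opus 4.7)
The proof is a direct combination of Lemma \ref{l5} with the fundamental formula (\ref{9}) under the linear decomposition $\bar{A}=\breve{A}+\operatorname{Id}$. First I would observe that, pointwise in $z\in\Gamma$,
\begin{equation*}
\langle \bar{A}(z)(Dl),Dg(z)\rangle \;=\; \langle \breve{A}(z)(Dl),Dg(z)\rangle \;+\; \langle Dl,Dg(z)\rangle,
\end{equation*}
so after integrating against $\mu$ over $\Gamma$ we get the additive splitting
\begin{equation*}
\mathcal{E}^{\Gamma,\bar{A}}(l(\cdot),g) \;=\; \mathcal{E}^{\Gamma,\breve{A}}(l(\cdot),g) \;+\; \mathcal{E}^{\Gamma}(l(\cdot),g).
\end{equation*}
This reduces the task to inserting an already-computed formula for each summand.

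For the antisymmetric piece I would substitute Lemma \ref{l5}, and for the symmetric Dirichlet piece the fundamental identity (\ref{9}). Multiplying through by $-1$ and adding yields
\begin{align*}
-\mathcal{E}^{\Gamma,\bar{A}}(l(\cdot),g) &= \int_\Gamma g\,\langle l,\beta^{\mu,\breve{A}}\rangle\,d\mu \;-\; \int_\Gamma g(z)\,\langle Al,z\rangle\,\mu(dz)\\
&\quad + \int_{\partial\Gamma} g\,\bigl(\langle l,\breve{A}\eta_\Gamma\rangle \;-\; {}_{H_1}\langle l,\eta_\Gamma\rangle_{H_1^*}\bigr)\,\frac{\|\partial\Gamma\|}{2}(dz).
\end{align*}
The two interior integrals already coincide with the first two terms in the statement, so only the boundary integrand needs to be recast as $\langle l,\bar{A}^*\nu_\Gamma\rangle$.

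For that last step, I would use $\nu_\Gamma=-\eta_\Gamma$ and $\bar{A}^*=\breve{A}^*+\operatorname{Id}$ to expand
\begin{equation*}
\langle l,\bar{A}^*\nu_\Gamma\rangle \;=\; -\langle l,\breve{A}^*\eta_\Gamma\rangle \;-\; {}_{H_1}\langle l,\eta_\Gamma\rangle_{H_1^*},
\end{equation*}
so the required equality reduces to $\langle l,\breve{A}\eta_\Gamma\rangle = -\langle l,\breve{A}^*\eta_\Gamma\rangle$, i.e.\ $\langle l,(\breve{A}+\breve{A}^*)\eta_\Gamma\rangle = 0$. Since $\breve{A}$ is fully antisymmetric by ($\mathit{0}2$), $\breve{a}_{ij}+\breve{a}_{ji}=0$ for all $i,j\geq 1$; expanding $l$ in the basis $\{e_i\}$ and using the series representation $\breve{A}\eta_\Gamma=\sum_{i,j\geq 1}\breve{a}_{ij}\,{}_{H_1}\langle e_j,\eta_\Gamma\rangle_{H_1^*}\,e_i$ from Lemma \ref{l5}, a dummy-index swap $i\leftrightarrow j$ turns the sum into its own negative, as required. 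Absolute convergence of the double series, which justifies the rearrangement, follows from the Frobenius bound ($\mathit{0}1$) together with $\eta_\Gamma\in H_1^*$ and $l\in D(A)$.

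The argument is almost entirely bookkeeping, the substantive work having been done in Lemma \ref{l5}. The only nontrivial point is the algebraic identification of the two boundary integrands via antisymmetry; I do not anticipate a serious obstacle beyond verifying that $\langle l,\breve{A}\eta_\Gamma\rangle$ and $\langle l,\breve{A}^*\eta_\Gamma\rangle$ are legitimately interpreted through the duality pairing ${}_{H_1}\langle\,\cdot\,,\,\cdot\,\rangle_{H_1^*}$, which is implicit in the way $\breve{A}\eta_\Gamma$ is defined in Lemma \ref{l5}.
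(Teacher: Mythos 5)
Your proposal is correct and is essentially the paper's argument: the authors obtain Proposition \ref{p6} precisely by adding Lemma \ref{l5} to the identity \eqref{9} and rewriting the boundary integrand via $\breve{A}=-\breve{A}^*$ and $\nu_\Gamma=-\eta_\Gamma$. The only cosmetic remark is that in Section \ref{sec3} one has $\mathbb{I}_\Gamma\in BV(H,H)$, so $\eta_\Gamma$ is $H$-valued and the pairing is just the $H$ inner product, which makes your convergence/duality caveat at the end automatic.
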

Proposition \ref{p6} now leads to the following intermediate result:
\begin{prop}\label{p3.4}
 There exists an $\mathcal{E}^\Gamma$-exceptional set $N\subset F$ such that for all $z \in F\setminus N$, under $P_z$ there exists an $\mathcal{M}_t$-cylindrical Wiener process $W^z$, 
such that the sample paths of $\mathbb{M}$ from Theorem \ref{th1}(iii)  on $F$
 satisfy the following:\\
 for $l \in D(A) $
 \begin{align}\label{11}
  \langle l,X_t-X_0 \rangle &= \int_0^t \langle l, \mathrm{d} W^z_s \rangle + \int_0^t \langle l, \beta^{\mu, \breve{A}}(X_s) \rangle \mathrm{d} s\\
  &-\int_0^t \langle Al, X_s \rangle \mathrm{d} s + \frac{1}{2}\int_0^t \langle  l, \bar{A}^{*}\nu_\Gamma(X_s) \rangle\; \mathrm{d}L^{\partial \Gamma}_s, \; t\geq0 \; P_z-\textup{a.s.}\notag
 \end{align}
Here $(L^{\partial \Gamma}_t)_{t\geq0} \in \mathcal{A}_{+}$ is uniquely associated to $\|\partial \Gamma\|$ via the Revuz correspondence and for all $z\in F\setminus N$
\begin{equation}\label{12}
 \int_0^t \mathbb{I}_{\partial \Gamma } \;(X_s) \mathrm{d} L^{\partial \Gamma}_s=L^{\partial \Gamma}_t, \quad t\geq0 \;\;P_z-\textup{a.s.}
\end{equation}
and $\bar{A}^{*} \nu_\Gamma(x):=\bar{A}^{*} (x) (\nu_\Gamma(x))\,,\; x \in \Gamma.$
\end{prop}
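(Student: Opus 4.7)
The plan is to combine the Fukushima decomposition \eqref{6} with the explicit identification of the zero-energy part $N^{[l]}$ afforded by the IBP formula of Proposition~\ref{p6}, and then to realise the martingale part $M^{[l]}$ as a stochastic integral against a cylindrical Wiener process on $H$.

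First, I would fix $l \in D(A)$ and read Proposition~\ref{p6} as saying that $-\mathcal{E}(l(\cdot),v) = \int v\, d\nu_l$ for all $v \in C_b^1(F)$, where
\[
\nu_l := \langle l,\beta^{\mu,\breve{A}}\rangle\,\mathbb{I}_\Gamma\,\mu \;-\; \langle Al,\cdot\rangle\,\mathbb{I}_\Gamma\,\mu \;+\; \tfrac{1}{2}\langle l,\bar{A}^{*}\nu_\Gamma\rangle\,\|\partial\Gamma\|.
\]
Boundedness of $\Gamma$ makes $\langle Al,\cdot\rangle$ bounded on $\Gamma$; together with $\beta^{\mu,\breve{A}}\in L^\infty(\Gamma,H;\mu)$ and the finiteness of $\|\partial\Gamma\|$, a Jordan decomposition $\nu_l = \nu_{l,1}-\nu_{l,2}$ with $\nu_{l,i}\in S$ finite is available. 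Proposition~\ref{p3}(1) combined with Remark~\ref{r2}(ii) then yields, outside some $\mathcal{E}^\Gamma$-exceptional set $N_l$,
\[
N^{[l]}_t = \int_0^t \langle l,\beta^{\mu,\breve{A}}(X_s)\rangle\, ds - \int_0^t \langle Al,X_s\rangle\, ds + \tfrac{1}{2}\int_0^t \langle l,\bar{A}^{*}\nu_\Gamma(X_s)\rangle\, dL^{\partial\Gamma}_s,
\]
where $L^{\partial\Gamma}\in\mathcal{A}_+$ is the PCAF Revuz-associated to $\|\partial\Gamma\|$.

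Second, I would build $W^z$ from the martingale family $\{M^{[l]}\}$. Proposition~\ref{p3}(2) gives $\langle M^{[l]}\rangle_t = t|l|^2$, and by polarisation together with the linearity of $l\mapsto M^{[l]}$ one obtains $\langle M^{[l]},M^{[l']}\rangle_t = t\langle l,l'\rangle$. Hence $(M^{[e_j]})_{j\geq 1}$ is, under each $P_z$, a sequence of independent real $(\mathcal{M}_t)$-Brownian motions by L\'evy's characterisation, and the formal series $\langle l, W^z_t\rangle := \sum_{j\geq1}\langle l,e_j\rangle M^{[e_j]}_t$ converges in $L^2(P_z)$ for every $l\in H$ and defines an $\mathcal{M}_t$-cylindrical Wiener process satisfying $M^{[l]}_t = \int_0^t\langle l, dW^z_s\rangle$ $P_z$-a.s. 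Inserting both identifications into \eqref{6} yields \eqref{11} for the chosen $l$ off $N_l$. To produce a single exceptional set $N$ good for all $l\in D(A)$, I would first carry out the above on a countable $\mathbb{Q}$-linear dense subset $\{l_n\}\subset D(A)$, set $N := \bigcup_n N_{l_n}$, and extend to arbitrary $l \in D(A)$ by $L^2$-continuity in $l$ of the stochastic integral and of the three drift terms (using that $\beta^{\mu,\breve{A}}$ is in $L^\infty$, that $A$ is continuous on $D(A)$ in the graph norm so $\langle Al,\cdot\rangle$ depends continuously on $l$, and that $\bar{A}^{*}\nu_\Gamma$ is $\|\partial\Gamma\|$-essentially bounded).

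Finally, \eqref{12} follows from Remark~\ref{r2}(ii) and uniqueness of the Revuz correspondence: since $\|\partial\Gamma\|$ is supported by $\partial\Gamma$, one has $\mathbb{I}_{\partial\Gamma}\|\partial\Gamma\| = \|\partial\Gamma\|$ as measures, so the PCAF $\int_0^{\cdot}\mathbb{I}_{\partial\Gamma}(X_s)\,dL^{\partial\Gamma}_s$ and $L^{\partial\Gamma}$ itself correspond to the same smooth measure and therefore agree $P_z$-a.s. for $z\in F\setminus N$. The principal technical obstacle is achieving uniformity of $N$ in $l$ for the boundary integral term, since $\nu_\Gamma$ is only defined $\|\partial\Gamma\|$-a.e.\ on $\partial\Gamma$; this is precisely what forces the countable-dense-subset-plus-approximation step, and it uses crucially that $L^{\partial\Gamma}$ charges only $\partial\Gamma$.
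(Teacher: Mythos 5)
Your proposal is correct and follows essentially the same route as the paper: identify the zero-energy part $N^{[l]}$ via the IBP formula of Proposition~\ref{p6} together with Proposition~\ref{p3}(1) and Remark~\ref{r2}, assemble the martingale parts $M^{[e_j]}$ into a cylindrical Wiener process using the bracket identity from Proposition~\ref{p3}(2) and L\'evy's characterisation, and obtain \eqref{12} from Remark~\ref{r2}(ii) since $\|\partial\Gamma\|$ is carried by $\partial\Gamma$. Your explicit countable-dense-subset argument for making the exceptional set $N$ uniform in $l\in D(A)$ is the ``standard argument'' the paper only alludes to, so if anything you are slightly more detailed on that point.
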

\begin{proof}
 By \cite[Theorem 4.3]{RZZ12}, we know that $\|\partial \Gamma \|$ is $\mathcal{E}^\Gamma$-smooth, hence $\mathcal{E}^{\Gamma,\breve{A}}$-smooth according to Remark \ref{r2} (i) and so by
 the Revuz correspondence there exists a unique $(L^{\partial \Gamma}_t)_{t\geq0} \in \mathcal{A}_{+}$ associated with $\| \partial \Gamma \|$. Surely $L^{\partial \Gamma}$ has property \eqref{12}
 by Remark \ref{r2} (ii) and since $\operatorname{supp} \left( \| \partial \Gamma \|\right) \subset \partial \Gamma$. \\
 Let $(e_j)_{j\geq1}$, be the orthonormal basis introduced at the beginning of Section 2. Define for $k \geq1$
 \begin{align}\label{13}
  W^z_k(t)&:= \langle e_k, X_t -z \rangle - \int_0^t \langle e_k, \beta^{\mu, \breve{A}} (X_s) \rangle \mathrm{d} s \\
  &+ \int_0^t \langle A e_k, X_s \rangle \mathrm{d} s - \frac{1}{2}\int_0^t \langle  e_k, \bar{A}^{*}\nu_\Gamma(X_s) \rangle\; \mathrm{d}L^{\partial \Gamma}_s \notag
 \end{align}
By Propositions \ref{p6} and \ref{p3} and Remark \ref{r2}, we obtain 
\begin{align}\label{14}
 N^{[e_k]}_t &= \int_0^t \langle e_k, \beta^{\mu, \breve{A}} (X_s)\rangle \mathrm{d} s - \int_0^t \langle Ae_k, X_s \rangle \mathrm{d} s \\
 &+ \frac{1}{2}\int_0^t \langle  e_k, \bar{A}^{*}\nu_\Gamma(X_s) \rangle \; \mathrm{d}L^{\partial \Gamma}_s\notag
\end{align}
Consequently, using \eqref{13}, \eqref{14} and the uniqueness of decomposition \eqref{6}, we see that
\begin{equation*}
  W^z_k(t) = M_t^{[e_k]}\;, \quad t \geq 0 \; \; P_z-\textup{a.s. for }\; \mathcal{E}^\Gamma\textup{-q.e. } z \in F,
\end{equation*}
where the $\mathcal{E}^\Gamma$-exceptional set, say $N$, can be chosen not to depend on $k\geq1$ by standard arguments. 
By direct calculations, we obtain 
$$\nu_{\langle M^{[e_i]}, M^{[e_j]}\rangle}=\delta_{ij}\mathbb{I}_\Gamma \mathrm{d}\mu = \nu_{\delta_{ij}\cdot t}\,.$$
Then using the uniqueness of the Revuz correspondence, we see that 
\begin{equation*}
 \langle M^{[e_i]},M^{[e_j]} \rangle_t = \delta_{ij} \cdot t\;. 
\end{equation*}
Thus for $z \in F\setminus N$, $W^z_k$ is an $\mathcal{M}_t$-Wiener process under $P_z$ and so \eqref{11} holds with $W^z$ being an $\mathcal{M}_t$-cylindrical Wiener process given by 
\begin{equation*}
 W^z(t) = ( W^z_k(t) e_k)_{k\geq1}\;.
\end{equation*}
\end{proof}
Next, we want to replace $\beta^{\mu, \breve{A}}$ in \eqref{11} by an arbitrary 
\begin{equation}\label{3.6}
 B \in L^\infty (\Gamma, H; \mu)\;.
\end{equation}
For this fix $T>0$. Then by general Dirichlet form theory 
\begin{equation}\label{3.7}
 \int_0^T \; |B(X_s)|^2 \mathrm{d} s \leq T \cdot \left\| \; |B(\cdot)|\; \right\|^2_{L^\infty(\Gamma; \mu)} <\infty\quad P_z-\textup{a.s.}
\end{equation}
for $\mathcal{E}^\Gamma$-q.e. $z \in F$ and we may assume that \eqref{3.7} holds for all $z \in F\setminus N$, $N$ as in Proposition \ref{p3.4}.\\
Consider the $\mathcal{M}_t$-cylindrical Wiener process $W^z$, $z \in F\setminus N$, from Proposition \ref{p3.4} and define for $z \in F \setminus N$
\begin{align}\label{ez}
 Z_t^z&:= \int_0^t \left\langle ( B - \beta^{\mu, \breve{A}}) (X_s), \mathrm{d} W^z_s \right\rangle \\
 &:= \sum\limits_{k\geq1} \int_0^t \langle e_k, (B - \beta^{\mu, \breve{A}}) (X_s) \rangle \mathrm{d} W^z_k(s), \quad t \in [0,T]. \notag
\end{align}
Then $Z^z$ is well-defined, more precisely the righthand side of \eqref{ez}
 converges $P_z$-a.s. uniformly on $[0,T]$ for all $z \in F\setminus N$. Indeed by Doob's inequality
\begin{align*}
 &E_z\left[ \sup\limits_{t\in[0,T]} \left| \sum\limits_{k=m}^n \int_0^t \langle e_k, (B - \beta^{\mu,\breve{A}})(X_s) \rangle \mathrm{d} W^z_k(s) \right|^2 \right]\\
 &\leq 4 E_z \left[ \left| \sum\limits_{k=m}^n \int_0^T  \langle e_k, (B - \beta^{\mu,\breve{A}})(X_s) \rangle \mathrm{d} W^z_k(s)\right|^2 \right]\\
 &= 4 \sum\limits_{k,l=m}^n E_z \left[ \int_0^T  \langle e_k, (B - \beta^{\mu,\breve{A}})(X_s) \rangle \mathrm{d} W^z_k(s)\cdot \int_0^T  \langle e_l, (B - \beta^{\mu,\breve{A}})(X_s) \rangle \mathrm{d} W^z_l(s)\right]\\
& = 4 \sum\limits_{k=m}^n E_z \left[ \int_0^T \left(  \langle e_k, (B - \beta^{\mu,\breve{A}})(X_s) \rangle \right)^2 \mathrm{d} s \right]
\end{align*}
which converges to $0$ for any $ z \in F \setminus N$ as $m,n \rightarrow \infty$ by \eqref{3.7}. Similarly, we can show that for any $(\mathcal{M}_t)$-stopping time $\tau \leq T$, we have 
\begin{equation*}
 E_z \left[ (Z^z_\tau)^2\right]= E_z \left[ \int_0^\tau \left|(B - \beta^{\mu,\breve{A}})(X_s)\right|^2 \mathrm{d}s\right]\;, \quad z \in F \setminus N.
\end{equation*}
Thus 
\begin{equation}
  \langle Z^z \rangle_t = \int_0^t \left| (B - \beta^{\mu,\breve{A}})(X_s)\right|^2 \mathrm{d} s, \quad t\in [0,T]\;,
\end{equation}
$P_z$-a.s. for any $z \in \setminus N$. Now define 
\begin{equation}
 \mathrm{d}\tilde{P}_z:= e^{Z^z_T-\frac{1}{2} \langle Z^z \rangle_T}\; \mathrm{d}P_z, \quad z \in F \setminus N
\end{equation}
on $(\Omega, \mathcal{M})$. Then the following holds:
\begin{prop}\label{p3.5}
 Let $z \in F \setminus N$. Then $\tilde{P}_z$ is a probability measure. Consequently,
 \begin{equation*}
  \tilde{W}^z_k(t):= W^z_k(t) - \int_0^t  \langle e_k, (B - \beta^{\mu,\breve{A}})(X_s) \rangle \mathrm{d} s\quad t \in[0,T], \; k\geq1\;,
 \end{equation*}
are independent real-valued Brownian motions on $( \Omega, \mathcal{M}, (\mathcal{M})_{t\geq0}, \tilde{P}_z)$ starting from $0$ , i.e. 
\begin{equation*}
 \tilde{W}^z_t:= (\tilde{W}^z_k(t) e_k)_{k\geq1}\,,\quad t\in [0,T]\;,
\end{equation*}
is an $\mathcal{M}_t$-cylindrical Wiener process.
\end{prop}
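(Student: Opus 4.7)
The plan is to check Novikov's criterion in order to conclude that the candidate density is a true probability density, and then to read off the Brownian motion property of the $\tilde{W}^z_k$ from Girsanov plus Lévy's characterization. First, by assumption \eqref{3.6} and $({\it0}3)$(ii) both $B$ and $\beta^{\mu,\breve{A}}$ lie in $L^\infty(\Gamma,H;\mu)$, so there is a constant $C>0$ with $|(B-\beta^{\mu,\breve{A}})(x)|\leq C$ for $\mu$-a.e.\ $x\in\Gamma$. Exactly as in \eqref{3.7}, enlarging $N$ by an $\mathcal{E}^\Gamma$-exceptional set if necessary, this transfers to the bound
\begin{equation*}
\langle Z^z\rangle_T \;=\; \int_0^T |(B-\beta^{\mu,\breve{A}})(X_s)|^2\,\mathrm{d}s \;\leq\; C^2T \qquad P_z\text{-a.s.}
\end{equation*}
for every $z\in F\setminus N$. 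In particular $E_z[\exp(\tfrac12\langle Z^z\rangle_T)]\leq \exp(\tfrac12C^2T)<\infty$, so Novikov's criterion applies and $\mathcal{E}(Z^z)_t:=\exp(Z^z_t-\tfrac12\langle Z^z\rangle_t)$ is a genuine $(\mathcal{M}_t)$-martingale on $[0,T]$ under $P_z$ with mean one. Hence $\tilde{P}_z(\Omega)=E_z[\mathcal{E}(Z^z)_T]=1$, which gives the first claim.

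For the second claim I would apply the one-dimensional Girsanov theorem to each $W^z_k$ individually. Since
\begin{equation*}
\langle W^z_k,Z^z\rangle_t \;=\; \int_0^t \langle e_k,(B-\beta^{\mu,\breve{A}})(X_s)\rangle\,\mathrm{d}s,
\end{equation*}
Girsanov yields that each $\tilde{W}^z_k$ is a continuous local $(\mathcal{M}_t)$-martingale under $\tilde{P}_z$ starting at $0$. Equivalence of $\tilde{P}_z$ and $P_z$ on $\mathcal{M}_T$ preserves quadratic covariations, so using $\langle M^{[e_i]},M^{[e_j]}\rangle_t=\delta_{ij}t$ from the proof of Proposition \ref{p3.4} we get
\begin{equation*}
\langle \tilde{W}^z_k,\tilde{W}^z_l\rangle_t \;=\; \langle W^z_k,W^z_l\rangle_t \;=\; \delta_{kl}\,t, \qquad t\in[0,T].
\end{equation*}
The multidimensional Lévy characterization then gives that $(\tilde{W}^z_k)_{k\geq 1}$ is a family of independent real-valued $(\mathcal{M}_t)$-Brownian motions, whence $\tilde{W}^z_t=(\tilde{W}^z_k(t)e_k)_{k\geq 1}$ is an $(\mathcal{M}_t)$-cylindrical Wiener process under $\tilde{P}_z$.

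The only place I expect technical care is the \emph{infinite-dimensional} aspect of the Girsanov step: the integrand $B-\beta^{\mu,\breve{A}}$ is $H$-valued and the driving noise is cylindrical, so the covariation $\langle W^z,Z^z\rangle$ must be handled via its definition through the componentwise series \eqref{ez}, not via a vectorial Itô formula. The uniform bound $\langle Z^z\rangle_T\leq C^2T$ is exactly what makes this harmless: it gives the $L^2$-convergence, uniformly on $[0,T]$, of the partial sums $Z^{z,N}:=\sum_{k=1}^N\int_0^\cdot\langle e_k,(B-\beta^{\mu,\breve{A}})(X_s)\rangle\,\mathrm{d}W^z_k(s)$ to $Z^z$, together with uniform integrability of the exponentials $\mathcal{E}(Z^{z,N})_T$, so the finite-dimensional Girsanov computations pass to the limit and justify the componentwise reduction used above.
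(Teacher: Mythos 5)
Your proof is correct and follows essentially the same route as the paper's: Novikov's criterion via the uniform $L^\infty$ bound \eqref{3.7} to get that $\tilde{P}_z$ is a probability measure, then the classical Girsanov theorem combined with L\'evy's characterization (the paper simply cites \cite[Proposition I.0.5]{LR15} for this standard step, which you have spelled out componentwise). Your additional remarks on handling the cylindrical noise through the series \eqref{ez} are a sound elaboration of what the citation leaves implicit.
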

\begin{proof}
 The proof is completely standard (cf. \cite[Proposition I.0.5]{LR15}) and follows from the classical Girsanov theorem and L\'{e}vy's characterization theorem (see e.g. \cite[IV. (3.6) Theorem, VIII. (1.7) Theorem]{RY99})
 once we know that $\tilde{P}_z$ is a probability measure. But this is immediate from Novikov's criterion (cf. \cite[VIII. (1.6) Corollary]{RY99}), since by \eqref{3.7} 
 \begin{equation*}
  E_z \left[ e^{\frac{1}{2}\langle Z^z \rangle_T} \right] \leq e^{\frac{T}{2} \|\; | (B- \beta^{\mu, \breve{A}})(\cdot)|\; \|^2_{L^\infty(\Gamma, \mu)}} \;< \infty\;.
 \end{equation*}
 \end{proof}
Collecting the results achieved so far in this section, we obtain the following:
\begin{thm}\label{th3.6}
 Let $T>0$, $B$ as in \eqref{3.6}, $\tilde{W}^z$ as in Proposition \ref{p3.5}, $( L^{\partial \Gamma}_t)_{t\geq0}$, $N$ and all other notions as in Proposition \ref{p3.4} and $z \in F \setminus N$. Then the following holds for the sample
 paths of $\mathbb{M}$ from Theorem \ref{th1}(iii) on F:\\
 for $l \in D(A) $
 \begin{align}
  \langle l, X_t&-X_0 \rangle = \int_0^t \langle l, \mathrm{d} \tilde{W}^z_s \rangle + \int_0^t \langle l, B(X_s) \rangle \mathrm{d} s \\
  &- \int_0^t \langle Al, X_s \rangle \mathrm{d} s + \frac{1}{2}\int_0^t \langle  l, \bar{A}^{*}\nu_\Gamma(X_s) \rangle\; \mathrm{d} L^{\partial\Gamma}_s\;, \quad t \in [0,T], \; \tilde{P}_z-\textup{a.s.}\notag
 \end{align}
and
\begin{equation}
  \int_0^t \mathbb{I}_{\partial \Gamma } (X_s) \mathrm{d} L^{\partial \Gamma}_s = L^{\partial \Gamma}_t\;, \quad t \geq 0, \; \tilde{P}_z-\textup{a.s.} 
\end{equation} 
\end{thm}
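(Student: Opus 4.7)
The plan is to obtain Theorem \ref{th3.6} by combining Proposition \ref{p3.4} with the Girsanov transformation of Proposition \ref{p3.5}; essentially no new analysis is required beyond a clean bookkeeping of stochastic integrals. Fix $T>0$, $B$ as in \eqref{3.6} and $z\in F\setminus N$, with $N$ the $\mathcal{E}^\Gamma$-exceptional set of Proposition \ref{p3.4}. Since $\tilde P_z$ is (by Proposition \ref{p3.5}) a probability measure equivalent to $P_z$ on $(\Omega,\mathcal{M}_T)$, every $P_z$-a.s.\ statement transfers automatically to a $\tilde P_z$-a.s.\ statement on $[0,T]$. In particular the local-time identity \eqref{12} from Proposition \ref{p3.4} gives the second conclusion of the theorem without further work, and the decomposition \eqref{11} continues to hold $\tilde P_z$-a.s.

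The only substantive step is rewriting the stochastic integral $\int_0^t\langle l,\mathrm{d}W^z_s\rangle$ in \eqref{11} in terms of $\tilde W^z$. For $l\in D(A)\subset H$, expand
\[
\int_0^t \langle l,\mathrm{d}W^z_s\rangle \;=\; \sum_{k\geq 1}\langle l,e_k\rangle\, W^z_k(t),
\]
the series converging in $L^2(P_z)$ uniformly on $[0,T]$ by the same Doob-inequality/Parseval argument that justified the definition of $Z^z$ in \eqref{ez}. Applying the defining identity of Proposition \ref{p3.5}, namely $W^z_k(t)=\tilde W^z_k(t)+\int_0^t\langle e_k,(B-\beta^{\mu,\breve A})(X_s)\rangle\,\mathrm{d}s$, termwise and invoking Fubini together with Cauchy--Schwarz (using $|B-\beta^{\mu,\breve A}|\in L^\infty(\Gamma;\mu)$ and \eqref{3.7}) to exchange the sum with the Bochner integral, one obtains
\[
\int_0^t\langle l,\mathrm{d}W^z_s\rangle \;=\; \int_0^t \langle l,\mathrm{d}\tilde W^z_s\rangle + \int_0^t\langle l,(B-\beta^{\mu,\breve A})(X_s)\rangle\,\mathrm{d}s.
\]

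Substituting this into \eqref{11} and observing that the auxiliary drift $\beta^{\mu,\breve A}$ cancels exactly against its copy introduced by Girsanov, one ends up with the identity stated in the theorem, with drift $B(X_s)\,\mathrm{d}s$ and the unchanged Ornstein--Uhlenbeck and reflection terms. The main (and only) technical obstacle is the interchange of the series $\sum_k\langle l,e_k\rangle\cdot$ with the Lebesgue integral in the argument above; this is harmless thanks to the $L^\infty$-bound on $B-\beta^{\mu,\breve A}$, so the proof reduces to assembling the two preceding propositions.
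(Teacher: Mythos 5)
Your proposal is correct and follows exactly the route the paper takes: the paper's proof is the one-line observation that the theorem follows from Propositions \ref{p3.4} and \ref{p3.5} together with the equivalence of $P_z$ and $\tilde{P}_z$. Your additional bookkeeping (rewriting $\int_0^t\langle l,\mathrm{d}W^z_s\rangle$ via the termwise substitution $W^z_k=\tilde W^z_k+\int_0^{\cdot}\langle e_k,(B-\beta^{\mu,\breve A})(X_s)\rangle\,\mathrm{d}s$ and justifying the interchange of sum and integral by the $L^\infty$-bound) is exactly the detail the paper leaves implicit, so nothing is missing.
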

\begin{proof}
 The assertions follow directly from Propositions \ref{p3.4} and \ref{p3.5} and the equivalence of $P_z$ and $\tilde{P}_z\;, z\in F \setminus N$.
\end{proof}
\subsection{Oblique reflection on a regular convex set}\label{s3.1}
In this subsection, we assume according to \cite[Hypothesis 1.1 (ii)]{BDT09} that 
\begin{itemize}
 \item [(${\it0}4$)] There exists a convex $C^\infty$-function $g: H \rightarrow \mathbb{R}$ with $g(0)=Dg(0)=0$ such that $\langle D^2 g(x) h, h \rangle \geq \gamma |h|^2 \quad \forall h \in H$ for some 
 $\gamma >0$, i.e. $D^2g$ is strictly positive definite and that 
 \begin{equation*}
  \Gamma= \left\{ x \in H \left| g(x) \leq 1 \right\}, \quad \partial \Gamma = \left\{ x \in H \right| g(x) =1 \right\},
 \end{equation*}
$D^2g$ is bounded on $\Gamma$ and $ | Q^{\frac{1}{2}} D g |^{-1} \in \bigcap\limits_{p>1} L^p (H; \mu)$.
\end{itemize}
Then the following holds:
\begin{lemma}\label{l3.7}
  \begin{itemize}
   \item[(i)] $\Gamma$ is convex and closed and there exists $\delta>0$ with $|Dg(x)| \leq \delta, \quad \forall x \in \Gamma$.
   \item[(ii)] $ \mathbb{I}_\Gamma \in BV(H,H)$.
   \item[(iii)] $\eta_\Gamma = \frac{Dg}{|Dg|}$ is the exterior normal to $\Gamma$, i.e. for $x \in \partial \Gamma$
   \begin{equation*}
    \langle \eta_\Gamma(x), y-x \rangle \leq 0, \quad \forall y \in \Gamma,
   \end{equation*}
and $| \eta_\Gamma(x)|=1$. Moreover 
\begin{equation*}
 \| \partial \Gamma \| (\mathrm{d} x) = \frac{Dg(x)}{|Q^{\frac{1}{2}} Dg(x) |} \mu_{\partial \Gamma} (\mathrm{d} x)\;,
\end{equation*}
where $\mu_{\partial \Gamma}$ is the surface measure induced by $\mu$ (cf. \cite{BDT09,BDT10,Mal97}).
\item[(iv)] $\Gamma$ has nonempty interior.
  \end{itemize}
\end{lemma}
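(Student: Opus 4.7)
The plan is to treat the four parts in the order (i), (iv), (iii), (ii), since the surface-measure identification in (iii) turns (ii) into a routine estimate. For (i), convexity of $\Gamma$ is immediate from convexity of $g$ applied to the sublevel $\{g\le 1\}$, and closedness from the continuity of $g$. To bound $|Dg|$ on $\Gamma$, I would first use Taylor's theorem with $g(0)=Dg(0)=0$ and $D^2g\ge\gamma\,\mathrm{Id}$ to obtain $g(x)\ge\tfrac{\gamma}{2}|x|^2$, so $\Gamma\subset\{|x|\le\sqrt{2/\gamma}\}$ is bounded. Since $0\in\Gamma$ and $\Gamma$ is convex, the segment $\{tx:t\in[0,1]\}$ lies in $\Gamma$ for every $x\in\Gamma$, and $Dg(x)=\int_0^1 D^2g(tx)x\,dt$ is bounded by $(\sup_\Gamma\|D^2g\|)\sqrt{2/\gamma}$. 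Part (iv) is immediate: by continuity of $g$, the set $\{g<1\}$ is open, contains $0$ and lies in $\Gamma$.

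For (iii), the outward-normal property for $x\in\partial\Gamma$ follows from the convex support inequality $g(y)\ge g(x)+\langle Dg(x),y-x\rangle$ combined with $g(y)\le 1=g(x)$ for $y\in\Gamma$; the assumption $|Q^{1/2}Dg|^{-1}\in\bigcap_{p>1}L^p(H;\mu)$ guarantees $Dg\ne 0$ $\mu$-a.e., and in particular on $\partial\Gamma$ (one can argue $\partial\Gamma$ is smooth enough for the condition to localize), so $\eta_\Gamma:=Dg/|Dg|$ is well defined with $|\eta_\Gamma|=1$. For the density formula I would approximate $\mathbb I_\Gamma$ by $\psi_\varepsilon\circ g$, where $\psi_\varepsilon$ is a smooth non-increasing mollification of $\mathbb I_{(-\infty,1]}$, and apply the basic Gaussian integration by parts to get
\[
\int_H(\psi_\varepsilon\circ g)\,D^*G\,d\mu=\int_H\psi_\varepsilon'(g)\,\langle G,Dg\rangle\,d\mu.
\]
Passing $\varepsilon\downarrow 0$ and using the coarea description of the Malliavin surface measure $\mu_{\partial\Gamma}$ on $\{g=1\}$ from \cite{BDT09,BDT10,Mal97} (for which the assumed $L^p$-integrability of $|Q^{1/2}Dg|^{-1}$ justifies the limit) the right-hand side converges to $-\int_{\partial\Gamma}\frac{\langle G,Dg\rangle}{|Q^{1/2}Dg|}\,d\mu_{\partial\Gamma}$. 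Comparing this with \eqref{7} together with $\eta_\Gamma=Dg/|Dg|$ yields the stated density.

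For (ii), I would combine (i) and (iii). For any $G\in(C_b^1)_{D(A)}$ with $|G|_H\le 1$,
\[
\left|\int_H\mathbb I_\Gamma\,D^*G\,d\mu\right|=\left|\int_{\partial\Gamma}\frac{\langle G,Dg\rangle}{|Q^{1/2}Dg|}\,d\mu_{\partial\Gamma}\right|\le\Big(\sup_{\Gamma}|Dg|\Big)\int_{\partial\Gamma}\frac{d\mu_{\partial\Gamma}}{|Q^{1/2}Dg|}.
\]
The first factor is finite by (i), and the second is finite thanks to $|Q^{1/2}Dg|^{-1}\in\bigcap_p L^p(H;\mu)$ combined with finiteness of $\mu_{\partial\Gamma}$ on bounded sets (here $\Gamma$ is bounded, so $\partial\Gamma$ is contained in a ball). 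Taking the supremum over $G$ gives $V(\mathbb I_\Gamma)<\infty$ in the $BV(H,H)$-sense.

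The main obstacle is the passage to the limit in (iii), i.e.\ reconciling the Malliavin surface measure conventions of \cite{BDT09,BDT10,Mal97} with the abstract surface measure $\|\partial\Gamma\|$ from \cite{RZZ12}; this is where (${\it0}4$) is used in full strength, both through the non-degeneracy $|Q^{1/2}Dg|^{-1}\in\bigcap_p L^p$ and through the boundedness of $D^2g$ on $\Gamma$ needed to make $\psi_\varepsilon\circ g\to\mathbb I_\Gamma$ in a sense strong enough to commute with the Gaussian integration by parts. The remaining parts are then either elementary (i, iv) or a direct estimate from (i) and (iii) (part ii).
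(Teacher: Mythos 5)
Your route differs from the paper's only in the sense that the paper does not actually prove this lemma: items (i)--(iii) are disposed of by citing \cite{BDT09} (Lemma 1.2) and \cite{RZZ12} (Theorems 5.3, 5.4 and Remark 5.5), and (iv) by the one-line remark that $D^2g$ is strictly positive definite. What you have written is essentially an unpacking of the proofs behind those citations: the Taylor bound $g(x)\geq\frac{\gamma}{2}|x|^2$ giving boundedness of $\Gamma$ and hence of $Dg$ on $\Gamma$, the mollification $\psi_\varepsilon\circ g$ combined with Gaussian integration by parts and the coarea formula, and the resulting identification of $\|\partial\Gamma\|$ --- this is exactly the strategy of \cite{BDT09} and \cite[Section 5]{RZZ12}. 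Parts (i) and (iv) are complete as you state them (your argument for (iv), that $0\in\{g<1\}\subset\Gamma$, is if anything cleaner than the paper's). Note also that the density you obtain, $|Dg|/|Q^{1/2}Dg|$, is the intended reading of the formula in the statement, whose numerator $Dg(x)$ is evidently a typo for $|Dg(x)|$.

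Two steps are justified too quickly. First, $Dg\neq0$ on $\partial\Gamma$ does \emph{not} follow from $|Q^{1/2}Dg|^{-1}\in\bigcap_{p>1}L^p(H;\mu)$: the set $\partial\Gamma$ is $\mu$-null, so a $\mu$-a.e.\ statement says nothing about it, and the appeal to ``localization'' hides this. The fix is elementary and does not need the integrability hypothesis at all: strong convexity gives $g(0)\geq g(x)+\langle Dg(x),-x\rangle+\frac{\gamma}{2}|x|^2$, i.e.\ $\langle Dg(x),x\rangle\geq 1+\frac{\gamma}{2}|x|^2>0$ for $x\in\partial\Gamma$, so $Dg$ never vanishes there. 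Second, and more seriously, in part (ii) you deduce $\int_{\partial\Gamma}|Q^{1/2}Dg|^{-1}\,d\mu_{\partial\Gamma}<\infty$ from ``$|Q^{1/2}Dg|^{-1}\in\bigcap_{p}L^p(H;\mu)$ combined with finiteness of $\mu_{\partial\Gamma}$''. As stated this is a non sequitur for the same reason: $L^p$-integrability with respect to $\mu$ controls nothing on the $\mu$-null hypersurface $\{g=1\}$, and finiteness of $\mu_{\partial\Gamma}$ does not help when the integrand may be unbounded on the boundary. The correct argument is the coarea one you already invoke in (iii): the $L^p(\mu)$-bounds on $|Q^{1/2}Dg|^{-1}$ are converted, via identities of the form $\int_H\phi(g)F\,d\mu=\int\phi(r)\bigl(\int_{\{g=r\}}F\,|Q^{1/2}Dg|^{-1}\,d\mu_{\{g=r\}}\bigr)\,dr$ together with a continuity-in-$r$ or good-level-selection argument, into bounds on the surface integral at the level $r=1$; this is where $({\it 0}4)$ is consumed in full and it is precisely the content of \cite[Theorem 5.3]{RZZ12}. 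With that substitution your estimate for $V(\mathbb{I}_\Gamma)$ closes correctly, so the proposal is a sound outline once these two justifications are repaired.
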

\begin{proof}
 \begin{itemize}
  \item [(i)] Follows from \cite[Lemma 1.2]{BDT09}.
  \item[(ii)] See \cite[Theorem 5.3]{RZZ12}.
  \item[(iii)] See \cite[Theorems 5.3 and 5.4 and Remark 5.5]{RZZ12}.
  \item[(iv)] This follows, since $D^2g$ is strictly positive definite by assumption.
 \end{itemize}
\end{proof}
By Lemma \ref{l4}, $\mathbb{I}_\Gamma$ satisfies the conditions that were postulated at the beginning of Section 3. It follows thus from Theorem \ref{th3.6}:  
\begin{thm}\label{th3.7a}
 Assume $({\it 0}1)-({\it 0}4)$, $\Gamma$ bounded, and \eqref{3.6}. Let $T>0$. Then there exist an $\mathcal{E}^\Gamma$-exceptional set $N \subset F$ such that 
 $\forall z \in F \setminus N$, under $\tilde{P}_z$ there exists an $\mathcal{M}_t$-cylindrical Wiener 
process $\tilde{W}^z$ such that the sample paths of $\mathbb{M}$ from Theorem \ref{th1}(iii) satisfy the following:\\
for $l \in D(A)$
\begin{align}\label{3.13}
 \langle l, X_t-X_0 \rangle& = \int_0^t \langle l, \mathrm{d} \tilde{W}^z_s \rangle + \int_0^t \langle l, B(X_s) \rangle \mathrm{d} s\\
 &- \int_0^t \langle Al, X_s \rangle \mathrm{d} s + \frac{1}{2} \int_0^t \langle l, \bar{A}^{*} \nu_\Gamma(X_s) \rangle \mathrm{d} L^{\partial \Gamma}_s, \quad t \in [0,T], \; \tilde{P}_z\textup{-a.s.} \notag
\end{align}
and
\begin{equation}
 \int_0^t \mathbb{I}_{\partial \Gamma}(X_s) \mathrm{d} L^{\partial \Gamma}_s=L^{\partial \Gamma}_t, \quad t \geq 0, \; \tilde{P}_z\textup{-a.s.}
\end{equation}
\end{thm}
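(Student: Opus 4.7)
The plan is to reduce Theorem \ref{th3.7a} to a direct application of Theorem \ref{th3.6}, since the former is just the latter specialized to the regular convex set defined in $({\it 0}4)$. So the task is essentially to check that the structural hypotheses used to derive Theorem \ref{th3.6} are all satisfied in this concrete setting.

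First I would invoke Lemma \ref{l3.7}. Parts (i) and (ii) give that $\Gamma=\{g\leq 1\}$ is closed and convex and that $\mathbb{I}_\Gamma\in BV(H,H)$, which are precisely the standing assumptions imposed at the beginning of Section \ref{sec3}. The boundedness of $\Gamma$ is hypothesized directly in the statement, which by Remark \ref{r7} is the only obstruction to having $\beta^{\mu,\breve A}\in L^\infty(\Gamma,H;\mu)$ from $({\it 0}3)$(ii). With $({\it 0}1)$--$({\it 0}3)$ in force, the whole framework of Section \ref{sec3} leading up to Proposition \ref{p3.4} applies verbatim: Lemma \ref{l4} provides the quasi-regular non-symmetric Dirichlet form $(\mathcal{E}^{\Gamma,\bar A},D(\mathcal{E}^{\Gamma,\bar A}))$ satisfying \eqref{H2}, Theorem \ref{th1} gives the associated diffusion $\mathbb{M}$ on $F$, and the identification of $N^{[e_k]}$ via Propositions \ref{p6} and \ref{p3} produces the Skorokhod decomposition \eqref{11} with boundary local time $L^{\partial\Gamma}$ corresponding to $\|\partial\Gamma\|$ through the Revuz correspondence.

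Next I would perform the Girsanov transform exactly as in Proposition \ref{p3.5} to replace the drift $\beta^{\mu,\breve A}$ by the arbitrary bounded drift $B\in L^\infty(\Gamma,H;\mu)$ given by \eqref{3.6}. Bondedness of $|B-\beta^{\mu,\breve A}|$ on $\Gamma$ ensures Novikov's criterion holds uniformly in the starting point $z\in F\setminus N$, so the exponential local martingale is a true martingale on $[0,T]$, the measure $\tilde P_z$ is a probability, and $\tilde W^z$ defined by subtracting the drift from $W^z$ is an $\mathcal{M}_t$-cylindrical Wiener process under $\tilde P_z$ by L\'evy's characterization. Rewriting \eqref{11} under $\tilde P_z$ yields \eqref{3.13}, and property \eqref{12} transfers to $\tilde P_z$ via absolute continuity; the exceptional set $N$ may be enlarged (but remains $\mathcal{E}^\Gamma$-exceptional) to absorb the null sets coming from the Girsanov change of measure and from \eqref{3.7}.

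There is no genuine obstacle here beyond careful bookkeeping of exceptional sets and verifying that the dispersion data $({\it 0}1)$--$({\it 0}3)$ together with the regularity of $g$ from $({\it 0}4)$ do not interfere with the application. The only delicate ingredient is the surface measure representation in Lemma \ref{l3.7}(iii), which is what guarantees that $\|\partial\Gamma\|$ is supported on the smooth boundary $\partial\Gamma=\{g=1\}$ with exterior normal $\eta_\Gamma=Dg/|Dg|$; but this is precisely the content cited from \cite{RZZ12} and is already encoded in the statement of Theorem \ref{th3.6}. Thus the proof reduces to a one-line application of Theorem \ref{th3.6} once Lemma \ref{l3.7} has been invoked.
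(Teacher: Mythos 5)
Your proposal is correct and follows exactly the paper's route: the paper's entire argument for Theorem \ref{th3.7a} is the observation (stated just before the theorem) that by Lemma \ref{l3.7} the set $\Gamma$ from $({\it0}4)$ is closed, convex and satisfies $\mathbb{I}_\Gamma\in BV(H,H)$, so the standing hypotheses of Section \ref{sec3} hold and Theorem \ref{th3.6} applies verbatim. Your additional walk-through of Propositions \ref{p3.4} and \ref{p3.5} merely unpacks what Theorem \ref{th3.6} already encapsulates, so there is no substantive difference.
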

\begin{remark}\label{r16}
 Since $\nu_\Gamma(x) \in H$, for any $x \in \partial \Gamma$ the reflection term in \eqref{3.13} may be interpreted as variable oblique reflection.
 First note that the angle between $\bar{A}^{*}\nu_\Gamma(x)$ and $\nu_\Gamma(x)$ is necessarily acute, since 
 $$\langle \bar{A}^{*} \nu_\Gamma(x), \nu_\Gamma(x) \rangle = \langle \nu_\Gamma(x), \nu_\Gamma(x) \rangle =1 >0.$$
 Thus the reflection angle $\theta(x)$ at $x \in \partial\Gamma$
 may be defined as the difference of $\frac{\pi}{2}$ and the angle between $\bar{A}^{*}\nu_\Gamma(x)$
 and $\nu_\Gamma(x)$, i.e. 
 \begin{equation*}
  \theta(x):=\arcsin \left( \frac{\left\langle \bar{A}^{*} \nu_\Gamma(x), \nu_\Gamma(x) \right\rangle }{| \bar{A}^{*}\nu_\Gamma(x) |} \right)= \arcsin \left( \frac{1}{| \bar{A}^{*}\nu_\Gamma(x) |} \right)\; \in \left(0, \frac{\pi}{2}\right],
 \end{equation*}
and a direction $F(x)$ of $\bar{A}^{*}\nu_\Gamma(x)$ may be defined as follows:\\
for any $ x \in \partial \Gamma$ there exist $( z_k(x))_{k\geq1} \subset H$ such that 
\begin{equation*}
 \left\{ \nu_\Gamma(x), z_1(x), z_2(x), \dots \right\}
\end{equation*}
forms an orthonormal basis of $H$. Then
\begin{equation*}
 F(x):= \sum\limits_{k\geq1} \langle \bar{A}^{*} \nu_\Gamma(x), z_k(x) \rangle z_k(x)\;, \quad x \in \partial \Gamma\;.
\end{equation*}
\end{remark}
\begin{lemma}
 The variable oblique reflection in \eqref{3.13} is uniquely determined through 
 \begin{equation*}
  \left( \theta(x), F(x) \right), \quad x \in \partial \Gamma\, ,
 \end{equation*}
where the reflection angle $ \theta(x)$ and the direction $F(x)$ at $ x \in \partial \Gamma$ are 
given as in Remark \ref{r16}.
\end{lemma}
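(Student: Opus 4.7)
The goal is to show that at each $x \in \partial\Gamma$, the vector $\bar{A}^{*}\nu_{\Gamma}(x)$ that multiplies $dL^{\partial\Gamma}$ in \eqref{3.13} is uniquely determined by the pair $(\theta(x), F(x))$. Since the inward normal $\nu_{\Gamma}(x)$ is intrinsic to $\Gamma$ by Lemma \ref{l3.7}(iii), it is enough to reconstruct $\bar{A}^{*}\nu_{\Gamma}(x)$ from $F(x)$ and then check that $\theta(x)$ is consistent with this reconstruction.

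The key step is an orthogonal expansion in the basis $\{\nu_{\Gamma}(x), z_{1}(x), z_{2}(x), \dots\}$ introduced in Remark \ref{r16}:
$$\bar{A}^{*}\nu_{\Gamma}(x) = \langle \bar{A}^{*}\nu_{\Gamma}(x), \nu_{\Gamma}(x)\rangle\, \nu_{\Gamma}(x) + \sum_{k\geq 1} \langle \bar{A}^{*}\nu_{\Gamma}(x), z_{k}(x)\rangle\, z_{k}(x).$$
The tangential sum is exactly $F(x)$ by definition. For the normal coefficient I would use $\bar{A}^{*} = \operatorname{Id} + \breve{A}^{*}$ together with the full antisymmetry $(\mathit{0}2)$, which transfers to $\breve{A}^{*}$ and forces $\langle \breve{A}^{*}\nu_{\Gamma}(x), \nu_{\Gamma}(x)\rangle = 0$, so $\langle \bar{A}^{*}\nu_{\Gamma}(x), \nu_{\Gamma}(x)\rangle = |\nu_{\Gamma}(x)|^{2} = 1$. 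Substituting this back into the expansion gives the clean identity
$$\bar{A}^{*}\nu_{\Gamma}(x) = \nu_{\Gamma}(x) + F(x),$$
so the reflection integrand is determined by $F(x)$ alone.

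Finally I would verify consistency with $\theta(x)$: the identity above yields $|\bar{A}^{*}\nu_{\Gamma}(x)|^{2} = 1 + |F(x)|^{2}$, and plugging into the definition of $\theta(x)$ in Remark \ref{r16} produces $\sin\theta(x) = 1/\sqrt{1+|F(x)|^{2}}$, which is automatically in $(0, \pi/2]$. Thus the pair $(\theta(x), F(x))$ reproduces $\bar{A}^{*}\nu_{\Gamma}(x)$ unambiguously.

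There is essentially no technical obstacle here — the whole argument rests on one orthogonal decomposition combined with the antisymmetry of $\breve{A}$. The conceptual point worth underlining, and the only thing I would stress in the write-up, is that the normal component of $\bar{A}^{*}\nu_{\Gamma}(x)$ is rigidly pinned to $\nu_{\Gamma}(x)$ with coefficient $1$, so all the freedom in prescribing oblique reflection is carried by the tangential vector $F(x)$, and the reflection angle $\theta(x)$ is then a redundant but automatically consistent label.
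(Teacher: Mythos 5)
Your argument is correct, but it proves the lemma by a genuinely different route than the paper. The paper's proof is a pure uniqueness argument: it treats the reflection vector as an unknown $v(x)$ with positive normal component, expands it in the orthonormal basis $\{\nu_\Gamma(x),z_1(x),z_2(x),\dots\}$, and recovers the missing normal coefficient from Parseval's identity together with $|v(x)|=1/\sin(\theta(x))$, i.e.
$\langle v(x),\nu_\Gamma(x)\rangle=\bigl((1/\sin\theta(x))^2-\sum_{k\ge1}|\langle v(x),z_k(x)\rangle|^2\bigr)^{1/2}$.
It thus never uses the specific form of $\bar{A}^{*}$ and shows that the parametrization by $(\theta,F)$ is injective on the whole class of vectors with acute reflection angle. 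You instead exploit the structure $\bar{A}^{*}=\operatorname{Id}+\breve{A}^{*}$ and the full antisymmetry $({\it 0}2)$ to get $\langle\breve{A}^{*}\nu_\Gamma(x),\nu_\Gamma(x)\rangle=0$, hence $\bar{A}^{*}\nu_\Gamma(x)=\nu_\Gamma(x)+F(x)$, which pins the normal coefficient to $1$ outright; this is exactly the computation the paper already performs in Remark \ref{r16} to show the angle is acute, so your use of it is legitimate. What your version buys is a sharper conclusion --- $F(x)$ alone determines the reflection vector and $\theta(x)=\arcsin\bigl((1+|F(x)|^2)^{-1/2}\bigr)$ is derived, redundant data --- which trivially implies the stated lemma. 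What the paper's version buys is generality: it explains why the pair $(\theta,F)$ is a faithful coordinate system for arbitrary oblique reflections with acute angle, not just for those of the special form $\bar{A}^{*}\nu_\Gamma$. Both proofs are sound; yours is shorter but narrower in scope.
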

\begin{proof}
 Let $x \in \partial \Gamma$. We have to show that a vector $v(x) \in H$ is uniquely determined through
 \begin{itemize}
 \item[(i)] $\langle v(x), \nu_\Gamma(x) \rangle >0$,
  \item[(ii)] the values $\langle v(x), z_k(x) \rangle\, , k \in \mathbb{Z}$, and
  \item[(iii)] the value $\arcsin \left( \frac{1}{|v(x)|}\right) = \theta(x)$.
 \end{itemize}
We have 
\begin{equation*}
  v(x) = \langle v(x), \nu_\Gamma(x) \rangle \nu_\Gamma(x) + \sum\limits_{k\geq1} \langle v(x), z_k(x) \rangle z_k(x)\,.
\end{equation*}
Thus by (ii), it is enough to determine $\langle v(x), \nu_\Gamma(x) \rangle$.\\
By Parseval's identity
\begin{equation*}
 |v(x)|^2= | \langle v(x), \nu_\Gamma(x) \rangle |^2 + \sum\limits_{k\geq1} | \langle v(x), z_k(x) \rangle |^2\,,
\end{equation*}
and by (iii), we have 
\begin{equation*}
 |v(x)| = \frac{1}{\sin(\theta(x))}\;.
\end{equation*}
Thus by (i) 
\begin{equation*}
 \langle v(x) ,\nu_\Gamma(x) \rangle = \sqrt{\left(\frac{1}{\sin(\theta(x))}\right)^2 - \sum\limits_{k\geq1} | \langle v(x), z_k(x) \rangle |^2}
\end{equation*}
which concludes the proof.
\end{proof}

\subsection{Uniqueness in the case of normal reflection}\label{s3.2}
In this subsection, we assume that $\breve{A}\equiv 0$ and that $({\it0}4)$ is satisfied. According to Remark \ref{r7}, we may hence drop the assumption that $\Gamma$ is bounded.\\
We consider the following stochastic inclusion in $H$,
\begin{equation}\label{e15}
 \begin{cases} \mathrm{d} X_t = \left( A X_t + B(X_t) + N_\Gamma(X_t) \right) \mathrm{d} t \; \ni \mathrm{d} W_t\\ X_0=x,\end{cases}
\end{equation}
where $B: \Gamma \rightarrow H$ is an everywhere uniformly bounded vector field, i.e.
\begin{equation}\label{e16}
  \sup\limits_{x \in \Gamma} | B(x)| < \infty\,,
\end{equation}
$W_t$ is a cylindrical Wiener process in $H$ on a filtered probability space $(\Omega, \mathcal{F}, (\mathcal{F}_t)_{t \geq0}, P)$ and $N_\Gamma(x)$ is the normal cone to $\Gamma$ at $x \in \Gamma$, i.e. 
\begin{equation*}
 N_\Gamma(x):=\left\{ z \in H | \langle z, y-x \rangle \leq 0 \; \forall y \in \Gamma \right\}\,.
\end{equation*}
\begin{definition}\label{d17}
 A pair of $H \times \mathbb{R}$-valued and $(\mathcal{F}_t)$-adapted processes $(X_t, L_t)$, $t \in [0,T]$, is called a solution of \eqref{e15} if the following conditions hold: 
 \begin{itemize}
  \item[(i)] $X_t \in \Gamma$, for all $t \in [0,T]$ P-a.s.,
  \item[(ii)] $L$ is an increasing process with 
  \begin{equation*}
   \mathbb{I}_{\partial \Gamma} (X_s) \mathrm{d} L_s = \mathrm{d} L_s \quad \text{P-a.s.},
  \end{equation*}
and for any $l\in D(A)$, we have 
\begin{align*}
 \langle l, X_t -x \rangle =& \int_0^t \langle l , \mathrm{d} W_s \rangle - \int_0^t \langle Al, X_s \rangle \mathrm{d}s - \int_0^t \langle l, B(X_s) \rangle \mathrm{d} s\\
 & \quad - \frac{1}{2} \int_0^t \langle l, \eta_\Gamma(X_s) \rangle \mathrm{d} L_s\,, \quad \forall t \in [0,T]\,, \; \text{P-a.s.},
\end{align*}
where $\eta_\Gamma$ is the exterior normal to $\Gamma$ (see Lemma \ref{l3.7}(iii)).
 \end{itemize}
\end{definition}
\begin{thm}\label{mono}
 Suppose that $({\it0}4)$ is satisfied and that $B$ satisfies additionally to the uniform boundedness in \eqref{e16}, the monotonicity condition 
 \begin{equation*}
  \langle B(u) - B(v), u-v \rangle \geq - \alpha |u-v|^2
 \end{equation*}
for all $u,v \in \Gamma$, for some $\alpha \in [0, \infty)$ independent of $u,v$. Then the stochastic inclusion \eqref{e15} admits at most one solution in the sense of Definition \ref{d17}.
\end{thm}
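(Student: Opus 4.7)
The plan is to establish pathwise uniqueness by deriving a Gronwall estimate for $|Y_t|^2$ with $Y_t := X^1_t - X^2_t$, where $(X^1,L^1), (X^2,L^2)$ are two solutions of \eqref{e15} defined on the same stochastic basis with the same $W$ and the same initial point $x$. The crucial simplification is that because the cylindrical noise cancels in the difference, the coordinate processes $\langle e_k, Y_t\rangle$ are continuous and of finite variation, so the ordinary (pathwise) chain rule replaces It\^o's formula and the whole argument reduces to an $L^2$-monotonicity computation against the OU drift $A$, the drift $B$, and the two reflection measures.

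First, I would write out the defining weak equation from Definition \ref{d17}(ii) for each solution, subtract, and test with $l = e_k$. This yields, for every $k\ge 1$,
\begin{equation*}
\langle e_k, Y_t\rangle = -\alpha_k\!\int_0^t\!\langle e_k,Y_s\rangle\,ds - \int_0^t\!\langle e_k, B(X^1_s)-B(X^2_s)\rangle ds - \tfrac{1}{2}\!\int_0^t\!\langle e_k,\eta_\Gamma(X^1_s)\rangle dL^1_s + \tfrac{1}{2}\!\int_0^t\!\langle e_k,\eta_\Gamma(X^2_s)\rangle dL^2_s,
\end{equation*}
which is continuous of finite variation. Applying the classical chain rule to the square and summing $k=1,\dots,N$, with $P_N$ the orthogonal projection onto $\mathrm{span}\{e_1,\dots,e_N\}$,
\begin{align*}
|P_N Y_t|^2 &= -2\!\int_0^t\!\langle A P_N Y_s, P_N Y_s\rangle\,ds - 2\!\int_0^t\!\langle P_N Y_s,P_N(B(X^1_s)-B(X^2_s))\rangle\,ds\\
&\quad - \int_0^t\!\langle P_N Y_s, P_N\eta_\Gamma(X^1_s)\rangle\,dL^1_s + \int_0^t\!\langle P_N Y_s,P_N\eta_\Gamma(X^2_s)\rangle\,dL^2_s.
\end{align*}
The first term is $\le 0$ since $\langle AP_N Y_s,P_N Y_s\rangle = \sum_{k=1}^N \alpha_k\langle e_k,Y_s\rangle^2\ge 0$, so I discard it. Letting $N\to\infty$, dominated convergence (using $|P_NY_s|\le|Y_s|$, uniform boundedness of $B$ in \eqref{e16}, and $|\eta_\Gamma|\le 1$) yields
\begin{equation*}
|Y_t|^2 \le -2\!\int_0^t\!\langle Y_s,B(X^1_s)-B(X^2_s)\rangle\,ds - \int_0^t\!\langle Y_s,\eta_\Gamma(X^1_s)\rangle\,dL^1_s + \int_0^t\!\langle Y_s,\eta_\Gamma(X^2_s)\rangle\,dL^2_s.
\end{equation*}
The monotonicity hypothesis bounds the $B$-term by $2\alpha\!\int_0^t|Y_s|^2ds$. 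For the reflection terms, using that $dL^1_s$ is supported where $X^1_s\in\partial\Gamma$, Lemma \ref{l3.7}(iii) with $y=X^2_s\in\Gamma$ gives $\langle\eta_\Gamma(X^1_s),Y_s\rangle = \langle\eta_\Gamma(X^1_s),X^1_s-X^2_s\rangle\ge 0$, so the third term is $\le 0$; symmetrically the fourth is $\le 0$. Hence $|Y_t|^2 \le 2\alpha\int_0^t|Y_s|^2\,ds$ with $Y_0=0$, and Gronwall forces $X^1\equiv X^2$. Substituting this back into the weak equation of Definition \ref{d17}(ii) and choosing any $l\in D(A)$ with $\langle l,\eta_\Gamma(\cdot)\rangle \not\equiv 0$ on $\partial\Gamma$ (possible since $\eta_\Gamma$ has unit norm) yields $L^1\equiv L^2$.

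The main obstacle is handling the infinite-dimensional $A$-term: one cannot simply write $\langle AY_s,Y_s\rangle$ because $Y_s$ need not lie in $D(A^{1/2})$. This is precisely why I introduce the finite-dimensional truncation $P_N$ and exploit that the $A$-contribution always has the favourable sign, so it can be discarded before taking the limit. The remaining subtleties — justifying the pathwise chain rule and the dominated-convergence passage — are routine because the cancellation of the noise makes $\langle e_k,Y_\cdot\rangle$ continuous of finite variation and all integrands in the $dL^i$-integrals are uniformly bounded.
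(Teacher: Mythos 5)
Your proof is correct and is, in substance, exactly the argument the paper relies on: the paper's own ``proof'' is just a citation of \cite[Theorem 5.18]{RZZ12}, and that proof is precisely this combination of noise cancellation in the difference, the favourable sign of the $A$-term (handled through finite-dimensional projection, as you do, or through Yosida approximation), the one-sided monotonicity of $B$, the normal-cone inequality of Lemma \ref{l3.7}(iii) applied on the supports of $\mathrm{d}L^1,\mathrm{d}L^2$ as guaranteed by Definition \ref{d17}(ii), and Gronwall. The only step that is not right as written is the final identification $L^1\equiv L^2$: a single $l\in D(A)$ with $\langle l,\eta_\Gamma(\cdot)\rangle\not\equiv 0$ does not suffice, since $\langle l,\eta_\Gamma(X_s)\rangle$ may vanish or change sign on the support of $\mathrm{d}(L^1-L^2)$, so one cannot divide by it. The standard fix is to use \emph{all} $l\in D(A)$ (dense in $H$) to conclude that the $H$-valued measures $\eta_\Gamma(X_s)\,\mathrm{d}L^1_s$ and $\eta_\Gamma(X_s)\,\mathrm{d}L^2_s$ on $[0,T]$ coincide, and then to pass to their total variation measures, which are $\mathrm{d}L^1_s$ and $\mathrm{d}L^2_s$ respectively because $|\eta_\Gamma(X_s)|=1$ for $\mathrm{d}L^i$-a.e.\ $s$ (as $X_s\in\partial\Gamma$ $\mathrm{d}L^i$-a.e.).
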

\begin{proof}
 The proof is exactly the same as in \cite[Theorem 5.18]{RZZ12}.
\end{proof}
Noting that we could construct a weak solution to \eqref{3.13} without the restrictive assumption of positive $\mu$-divergence on $B$ as in \cite[(5.15)]{RZZ12}, we obtain the following generalization of \cite[Theorem 5.19]{RZZ12}.
\begin{thm}\label{uniqueness}
 Let $T >0$. There exist a Borel set $M \subset H$ with $\mu(\Gamma \cap M) = \mu(\Gamma)$ such that for every $x \in M$, \eqref{e15} has a pathwise unique continuous strong solution in the sense that for every 
 probability space $(\Omega, \mathcal{F}, (\mathcal{F}_t),P)$ with an $\mathcal{F}_t$-Wiener process $W$, there exists a unique pair of $\mathcal{F}_t$-adapted process $(X,L)$ satisfying Definition \ref{d17}
 and $P(X_0=x)=1$. Moreover $X_t \in M$ for all $t \in[0,T]$ P-a.s.
\end{thm}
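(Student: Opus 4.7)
The plan is to combine the weak existence provided by Theorem \ref{th3.7a} (specialized to $\breve A\equiv 0$) with the pathwise uniqueness of Theorem \ref{mono}, and then invoke the infinite-dimensional Yamada-Watanabe principle to pass from weak to strong existence. This closely follows the argument of \cite[Theorem 5.19]{RZZ12}, but now draws weak existence from the improved Theorem \ref{th3.7a}, which does not require the positive $\mu$-divergence assumption on $B$ imposed in \cite[(5.15)]{RZZ12}.

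First, I would observe that when $\breve A\equiv 0$, we have $\bar A=\operatorname{Id}$ and $\bar A^{*}\nu_\Gamma=\nu_\Gamma=-\eta_\Gamma$, so the identity \eqref{3.13} reduces precisely to the integrated equation in Definition \ref{d17}(ii), while the flatness condition in Theorem \ref{th3.6} coincides with Definition \ref{d17}(ii). Hence for every $z\in F\setminus N$, the pair $(X,L^{\partial\Gamma})$ constructed on the canonical space under $\tilde P_z$ is a weak solution of \eqref{e15} in the sense of Definition \ref{d17}. Since $\mathcal E^\Gamma$-exceptional sets are $\mu$-null on $\Gamma$ and $F=\operatorname{supp}(\mathbb I_\Gamma\,d\mu)$ agrees with $\Gamma$ up to a $\mu$-null set, the Borel set $M_0:=F\setminus N$ satisfies $\mu(\Gamma\cap M_0)=\mu(\Gamma)$ and supports weak existence.

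Next, Theorem \ref{mono} supplies pathwise uniqueness for \eqref{e15}. A Yamada-Watanabe-type argument adapted to the reflected Hilbert space setting (cf. \cite[Proposition I.0.5]{LR15}) then upgrades the weak solution on the canonical space into a strong solution on an arbitrary filtered probability space carrying a cylindrical Wiener process $W$, for every starting point $x\in M_0$; pathwise uniqueness further yields uniqueness in law on that space. The key point is that the bounded-variation component $L$ in Definition \ref{d17} is uniquely determined by $X$ (as the Revuz-correspondent local time of $\partial\Gamma$), so the solution map takes values in a standard Borel space of path-local-time pairs, allowing the abstract Yamada-Watanabe machinery to apply.

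Finally, for the invariance claim $X_t\in M$ for all $t\in[0,T]$ P-a.s., I would shrink $M_0$ by a further $\mathcal E^\Gamma$-exceptional set. By quasi-regularity of $(\mathcal E^\Gamma,D(\mathcal E^\Gamma))$ (cf.\ \cite[IV.5]{MR92}), the set of starting points from which the diffusion $\mathbb M$ hits the $\mathcal E^\Gamma$-exceptional set $F\setminus M_0$ with positive probability is itself $\mathcal E^\Gamma$-exceptional and thus $\mu$-null on $\Gamma$; removing it from $M_0$ produces the desired $M$. The principal obstacle in this plan is the Yamada-Watanabe step: one must check that the local-time component $L^{\partial\Gamma}$ admits a measurable, path-functional realization so that the passage from weak to strong solution preserves both the reflection constraint and the adaptedness to the given filtration, but this is precisely ensured by the monotonicity estimate underlying Theorem \ref{mono} together with the identification of $L^{\partial\Gamma}$ via the Revuz correspondence.
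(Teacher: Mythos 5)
Your proposal is correct and takes essentially the same route as the paper, whose proof simply defers to that of \cite[Theorem 5.19]{RZZ12}: weak existence from the construction of Section \ref{sec3} specialized to $\breve{A}\equiv 0$ (where boundedness of $\Gamma$ may be dropped by Remark \ref{r7}), pathwise uniqueness from Theorem \ref{mono}, and a Yamada--Watanabe argument yielding the strong solution together with the invariance of $M$. The only minor inaccuracy is the citation of \cite[Proposition I.0.5]{LR15} (which is the Girsanov statement used earlier) for the Yamada--Watanabe step, but this does not affect the argument.
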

\begin{proof}
 The assertion follows exactly as in the proof of \cite[Theorem 5.19]{RZZ12}.
\end{proof}

\section{The (countably) skew reflected OU-process}\label{sec4}
In this section, we will fix $\rho$ according to \eqref{2}. \\
We consider an increasing sequence of closed and convex subsets $(\Gamma_{\alpha_k})_{k\in \mathbb{Z}}$ (i.e. $\Gamma_{\alpha_k} \subset \Gamma_{\alpha_{k+1}} \quad \forall k \in \mathbb{Z}$) of $H$ such that
\begin{align*}
 \tag{S1}\label{S1}&\mathbb{I}_{\Gamma_{\alpha_k}} \in BV(H,H_1) \quad \forall k \in \mathbb{Z} \\ \tag{S2}\label{S2}
 &\lim\limits_{k \rightarrow - \infty} \mu ( \Gamma_{\alpha_k})=0 \textup{ and } \lim\limits_{k \rightarrow \infty} \mu (\Gamma_{\alpha_k}) = \mu(H)
\end{align*}
and a sequence $(\gamma_k)_{k\in \mathbb{Z}} \subset (0,\infty)$ such that
\begin{align*}\tag{S3}\label{S3}
 &\frac{1}{c_0} \leq \gamma_k \leq c_0 \quad \forall k \in \mathbb{Z} \quad \textup{for some constant } c_0>1,\\
 & \exists \; \bar{\gamma}:= \lim\limits_{k\rightarrow \infty} \gamma_k \quad \textup{ and }\notag
\end{align*}
\begin{equation}\label{4.1}
 \sum\limits_{k\in \mathbb{Z}} | \gamma_{k+1} - \gamma_k |\; \| \partial \Gamma_{\alpha_k} \|\; (\partial \Gamma_{\alpha_k}) < \infty\;.
\end{equation}
Then set 
\begin{equation*}
 \rho:= \sum\limits_{k \in \mathbb{Z}} \gamma_{k+1} \mathbb{I}_{\Gamma_{\alpha_{k+1}}\setminus \Gamma_{\alpha_k}}\;. 
\end{equation*}
By \eqref{S2} and \eqref{S3} we have 
\begin{equation}
 \frac{1}{c_0} \leq \rho \leq c_0 \quad \mu\textup{-a.e. } \; \textup{ on } H,
\end{equation}
hence \eqref{2} is satisfied. Moreover \eqref{H2} holds by setting 
\begin{equation*}
  (\mathcal{E}, D(\mathcal{E})):= ( \mathcal{E}^\rho, D( \mathcal{E}^\rho))\;.
\end{equation*}
Let $\mathbb{M}=(\Omega, \mathcal{M}, (\mathcal{M}_t)_{t\geq0}, (X_t)_{t\geq0}, (P_z)_{z\in H})$ be the conservative
diffusion of Theorem \ref{th1}(iii) that is associated with $(\mathcal{E}, D(\mathcal{E})$.
\vspace*{5pt}
\\Now, we can show the following:
\begin{lemma}\label{l13}
 \begin{itemize}
  \item[(i)] For each $k \in \mathbb{Z}, \; \| \partial \Gamma_{\alpha_k} \|$ is $\mathcal{E}$-smooth. 
  There exists hence a unique $( L^{\partial \Gamma_{\alpha_k}}_t)_{t\geq 0}\in \mathcal{A}_{+}$ associated to $\| \partial \Gamma_{\alpha_k}\|$
  via the Revuz correspondence.
  \item[(ii)] $\sum\limits_{k \in \mathbb{Z}} \frac{\gamma_{k+1} - \gamma_k}{2} \;\left\| \partial \Gamma_{\alpha_k} \right\|$ is the difference of finite $\mathcal{E}$-smooth measures and uniquely associated
  via the Revuz correspondence to $\left( \sum\limits_{k \in \mathbb{Z}} \frac{( \gamma_{k+1} - \gamma_k)}{2}\; L^{\partial \Gamma_{\alpha_k}}_t \right)_{t\geq 0}$ which converges locally uniformly in $t \geq0$.
 \end{itemize}
\end{lemma}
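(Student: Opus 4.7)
The plan is to prove (i) by transferring the known $\mathcal{E}^{\Gamma_{\alpha_k}}$-smoothness of $\|\partial\Gamma_{\alpha_k}\|$ (which follows from \eqref{S1} together with the results cited around \eqref{7}) to $\mathcal{E}$-smoothness on $H$. By \eqref{S3}, $\rho$ is bounded above and below by positive constants, so $\mathcal{E}=\mathcal{E}^\rho$ on $L^2(H;\rho\,d\mu)$ has norm equivalent to the classical Ornstein--Uhlenbeck Dirichlet form (the case $\rho\equiv 1$) on $L^2(H;\mu)$, and---exactly as in the proof of Theorem \ref{th1}(i)---the exceptional sets of the two forms coincide. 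Since the restriction of any element of the domain of the OU form to $\Gamma_{\alpha_k}$ lies in $D(\mathcal{E}^{\Gamma_{\alpha_k}})$ with no larger Dirichlet norm, every $\mathcal{E}$-exceptional subset $A$ of $H$ meets $\Gamma_{\alpha_k}$ in an $\mathcal{E}^{\Gamma_{\alpha_k}}$-exceptional set, which is $\|\partial\Gamma_{\alpha_k}\|$-null. As $\|\partial\Gamma_{\alpha_k}\|$ is finite and supported in $\partial\Gamma_{\alpha_k}$, this places it in $S$, and the Revuz correspondence \eqref{5} produces a unique PCAF $L^{\partial\Gamma_{\alpha_k}}\in\mathcal{A}_+$ associated with it.

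For (ii) I would first set $\nu_k:=\tfrac{\gamma_{k+1}-\gamma_k}{2}\|\partial\Gamma_{\alpha_k}\|$ and decompose $\nu_k=\nu_k^+-\nu_k^-$ into positive and negative parts. By \eqref{4.1} the series $\nu^\pm:=\sum_{k\in\mathbb{Z}}\nu_k^\pm$ converge in total variation to finite positive measures on $(H,\mathcal{B}(H))$. Each $\nu^\pm$ is $\mathcal{E}$-smooth, being a countable sum of the $\mathcal{E}$-smooth measures from (i) whose total mass is finite; such a sum charges no $\mathcal{E}$-exceptional set and is finite, hence lies in $S$. Consequently $\sum_k\nu_k=\nu^+-\nu^-$ is the difference of two finite $\mathcal{E}$-smooth measures, and by linearity of the Revuz correspondence it is associated to $C_t:=C_t^{\nu^+}-C_t^{\nu^-}$; what remains is to identify $C_t$ with $\sum_k\tfrac{\gamma_{k+1}-\gamma_k}{2}L_t^{\partial\Gamma_{\alpha_k}}$ and to establish locally uniform convergence of this series in $t$.

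For the latter, I would exploit that $\mathcal{E}=\mathcal{E}^\rho$ is symmetric and conservative (Theorem \ref{th1}(ii)), so $\rho\,d\mu$ is invariant for the associated semigroup and the standard stationary form of the Revuz identity yields, for every $T>0$,
\begin{equation*}
\int_F E_x\!\left[\sum_{k\in\mathbb{Z}}\frac{|\gamma_{k+1}-\gamma_k|}{2}L_T^{\partial\Gamma_{\alpha_k}}\right]\rho(x)\,\mu(dx)=T\sum_{k\in\mathbb{Z}}\frac{|\gamma_{k+1}-\gamma_k|}{2}\|\partial\Gamma_{\alpha_k}\|(\partial\Gamma_{\alpha_k}),
\end{equation*}
which is finite by \eqref{4.1}. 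Hence $P_z$-a.s.\ for $\rho\mu$-a.e.\ $z\in F$, the tail $\sum_{|k|>N}\tfrac{|\gamma_{k+1}-\gamma_k|}{2}L_T^{\partial\Gamma_{\alpha_k}}$ tends to $0$ as $N\to\infty$; because each $L^{\partial\Gamma_{\alpha_k}}$ is nondecreasing in $t$, this tail is also nondecreasing in $t$, so the pointwise bound at $t=T$ dominates the supremum over $[0,T]$, giving locally uniform convergence of the partial sums for $\nu^+$ and $\nu^-$, and thus for their difference. The identification of $C_t$ with the limit is then forced by uniqueness in the Revuz correspondence (Remark \ref{r2}(ii)). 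The main obstacle I expect is the bookkeeping in (i): the smoothness of $\|\partial\Gamma_{\alpha_k}\|$ available from \cite{RZZ12} is formulated with respect to the form $\mathcal{E}^{\Gamma_{\alpha_k}}$ on $L^2(\Gamma_{\alpha_k};\mu)$, so one must check carefully that exceptionality transfers across the embedding $\Gamma_{\alpha_k}\hookrightarrow H$ via the norm-equivalence from \eqref{S3}, which is what makes Theorem \ref{th1}(i) applicable in the present reference-measure setting.
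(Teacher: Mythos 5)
Your part (i) is essentially sound, though it takes a slightly different route from the paper: you transfer exceptionality across the embedding $\Gamma_{\alpha_k}\hookrightarrow H$ via a capacity comparison, whereas the paper invokes \cite[Theorem 3.1(ii) and Remark 4.1]{RZZ12} to get smoothness of $\|\partial\Gamma_{\alpha_k}\|$ with respect to the form with density $\mathbb{I}_{\Gamma_{\alpha_k}}+\mathbb{I}_H$ on $L^2(H,(\mathbb{I}_{\Gamma_{\alpha_k}}+\mathbb{I}_H)\,\mathrm{d}\mu)$ --- a form defined on all of $H$ whose density is bounded above and below, so that the equivalence of exceptional sets with $\mathcal{E}=\mathcal{E}^\rho$ follows at once from the argument of Theorem \ref{th1}(i), with no cross-space restriction needed. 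Your version works, but you should verify the continuity of the restriction map $D(\mathcal{E}^{\mathbb{I}_H})\to D(\mathcal{E}^{\Gamma_{\alpha_k}})$ on the closures (not just on $C^1_b$) before comparing capacities; the paper's choice of reference form sidesteps this.

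The genuine gap is in part (ii). Your argument for locally uniform convergence rests on the stationary Revuz identity integrated against $\rho\,\mathrm{d}\mu$, which only yields that the tails vanish $P_z$-a.s.\ for $\rho\mu$-\emph{a.e.} $z$. The lemma needs the convergence (and hence the PCAF property of the limit) for $\mathcal{E}$-\emph{q.e.} $z$: that is what makes the limit an element of $\mathcal{A}_+-\mathcal{A}_+$ and what is used in Theorem \ref{t4.4}, which holds outside an exceptional set. Upgrading from a.e.\ to q.e.\ is not automatic and you do not address it. The paper avoids the issue entirely: after reducing to the case $\gamma_{k+1}-\gamma_k\geq 0$, it compares the Revuz measure of each partial sum $\sum_{k=-n}^{n}\frac{\gamma_{k+1}-\gamma_k}{2}L^{\partial\Gamma_{\alpha_k}}$ with the Revuz measure $\nu_C$ of the single PCAF $C$ associated to the full sum, and the comparison theorem for PCAFs with ordered Revuz measures gives $P_z\bigl(\sum_{k=-n}^{n}\frac{\gamma_{k+1}-\gamma_k}{2}L_t^{\partial\Gamma_{\alpha_k}}\leq C_t\bigr)=1$ for q.e.\ $z$; the Weierstrass $M$-test then gives locally uniform convergence q.e. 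You already have all the ingredients for this (you constructed $C^{\nu^+}$ and $C^{\nu^-}$), so the fix is to replace the stationary-expectation estimate by this domination argument, or else to supply an excessive-function argument (e.g.\ showing the $1$-potentials of the tail measures decrease to an excessive function vanishing a.e., hence q.e.) to promote your a.e.\ conclusion.
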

\begin{proof}
\begin{itemize}
 \item[(i)] Let
 $k \in \mathbb{Z}$. By \cite[Theorem 3.1(ii) and Remark 4.1]{RZZ12} $\| \partial \Gamma_{\alpha_k}\|$ is smooth with respect to the closure on $L^2( H, (\mathbb{I}_{\Gamma_{\alpha_k}}+\mathbb{I}_H) \mathrm{d} \mu)$ of
 \begin{equation*}
  \frac{1}{2} \int\limits_H \langle Du, Du \rangle (\mathbb{I}_{\Gamma_{\alpha_k}} + \mathbb{I}_H) \mathrm{d} \mu\;, \quad u,v \in C^1_b(H)\;.
 \end{equation*}
But then $\| \partial \Gamma_{\alpha_k} \|$ is also smooth with respect to $\mathcal{E}$ (see Theorem \ref{th1}(i) and its proof).
\item[(ii)] By \eqref{4.1}  and (i), $ \sum\limits_{k\in \mathbb{Z}} \frac{ \gamma_{k+1} - \gamma_k}{2} \| \partial \Gamma_{\alpha_k} \|$ is the difference of finite $\mathcal{E}$-smooth
measures and hence associated via the Revuz correspondence to a unique $(C_t)_{t\geq0}= (C_t^1-C_t^2)_{t\geq0}$, with $(C_t^1)_{t\geq0}, (C_t^2)_{t\geq0} \in \mathcal{A}_{+}$. 
Considering $C^1$ and $C^2$ separately, we may assume that $(C_t)_{t\geq0} \in \mathcal{A}_{+}$. Separating the supports of $(\| \partial \Gamma_{\alpha_k}\|)_{k\in \mathbb{Z}}$ we may even suppose that $\gamma_{k+1} - \gamma_k \geq0\quad \forall k \in \mathbb{Z}$.
Then since 
\begin{align*}
 \nu_{\sum\limits_{k=-n}^n \frac{\gamma_{k+1}-\gamma_k}{2} L^{\partial \Gamma_{\alpha_k}}}&= \sum\limits_{k=-n}^n \frac{\gamma_{k+1} - \gamma_k}{2} \;\| \partial \Gamma_{\alpha_k} \|\\
 &\leq \sum\limits_{k \in \mathbb{Z}} \frac{\gamma_{k+1}-\gamma_k}{2} \; \| \partial \Gamma_{\alpha_k} \| = \nu_C\;,
\end{align*}
it follows from general Dirichlet form theory, that for any $n\geq0$ and $t\geq0$ 
\begin{equation*}
 P_z \left( \sum_{k=-n}^n \frac{\gamma_{k+1}-\gamma_k}{2} \; L^{\partial \Gamma_{\alpha_k}}_t \leq C_t \right) = 1 \quad \textup{for }\mathcal{E}\textup{-q.e. } x \in H.
\end{equation*}
Hence by the Weierstrass $M$-test $\left( \sum\limits_{k \in \mathbb{Z}} \frac{\gamma_{k+1} - \gamma_k}{2} \; L^{\partial \Gamma_{\alpha_k}}_t \right)_{t\geq0}$ converges
$P_z$-a.s. locally uniformly in $t\geq0$ for $\mathcal{E}$-q.e. $z \in H$.\\ Thus $\left( \sum\limits_{k\in \mathbb{Z}} \frac{\gamma_{k+1}- \gamma_k}{2} \; L^{\partial \Gamma_{\alpha_k}}_t \right)_{t\geq0}$
is a PCAF of $\mathbb{M}$ and it is easy to see that its Revuz measure coincides with $\sum_{k\in \mathbb{Z}} \frac{\gamma_{k+1}-\gamma_k}{2}\; \| \partial \Gamma_{\alpha_k}\|$.
\end{itemize}
\end{proof}
For the identification of the SDE corresponding to $\mathbb{M}$, we now need to specify an integration by parts formula for $\mathcal{E}=\mathcal{E}^\rho$.
\begin{proposition}\label{l14}
 Let $l \in D(A) \cap H_1$ and $g \in C_b^1(H)$. Then 
 \begin{equation*}
  - \mathcal{E}(l(\cdot),g) = - \int\limits_H g \langle Al, z \rangle \rho \mu (\mathrm{d} z) + \sum\limits_{k \in \mathbb{Z}} \int\limits_{\partial \Gamma} g\; { }_{H_1} \langle l, \eta_{\Gamma_{\alpha_k}} \rangle_{H_1^*} ( \gamma_{k+1} - \gamma_k ) \frac{\| \partial \Gamma_{\alpha_k} \|}{2}.
 \end{equation*}
\end{proposition}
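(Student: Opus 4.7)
The plan is to start from the defining formula
$-\mathcal{E}(l(\cdot),g)=-\frac{1}{2}\int_H \langle l,Dg\rangle\,\rho\,d\mu$
and unravel $\rho$ via the step-function structure
$\rho=\sum_{k\in\mathbb{Z}}\gamma_{k+1}\mathbb{I}_{\Gamma_{\alpha_{k+1}}\setminus\Gamma_{\alpha_k}}$, then decompose each annular indicator as $\mathbb{I}_{\Gamma_{\alpha_{k+1}}\setminus\Gamma_{\alpha_k}}=\mathbb{I}_{\Gamma_{\alpha_{k+1}}}-\mathbb{I}_{\Gamma_{\alpha_k}}$, apply the fundamental IBP identity \eqref{9} separately on each $\Gamma_{\alpha_k}$, and finally perform an Abel (summation-by-parts) rearrangement on the boundary pieces.

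First I would write
$-\mathcal{E}(l(\cdot),g)=-\tfrac{1}{2}\sum_k\gamma_{k+1}\bigl(\int_{\Gamma_{\alpha_{k+1}}}\langle l,Dg\rangle d\mu-\int_{\Gamma_{\alpha_k}}\langle l,Dg\rangle d\mu\bigr)$.
Since $\mathbb{I}_{\Gamma_{\alpha_k}}\in BV(H,H_1)$ for each $k$ by \eqref{S1}, formula \eqref{9} applies with $\Gamma$ replaced by $\Gamma_{\alpha_k}$, so that
$-\tfrac{1}{2}\int_{\Gamma_{\alpha_k}}\langle l,Dg\rangle d\mu=-\int_{\Gamma_{\alpha_k}}g\langle Al,z\rangle d\mu-I_k$, where $I_k:=\int_{\partial\Gamma_{\alpha_k}}g\,{}_{H_1}\langle l,\eta_{\Gamma_{\alpha_k}}\rangle_{H_1^*}\tfrac{\|\partial\Gamma_{\alpha_k}\|}{2}$. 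The bulk contributions telescope back into $\rho\,d\mu$ because
$\sum_k\gamma_{k+1}\bigl(\int_{\Gamma_{\alpha_{k+1}}}g\langle Al,z\rangle d\mu-\int_{\Gamma_{\alpha_k}}g\langle Al,z\rangle d\mu\bigr)=\sum_k\gamma_{k+1}\int_{\Gamma_{\alpha_{k+1}}\setminus\Gamma_{\alpha_k}}g\langle Al,z\rangle d\mu=\int_H g\langle Al,z\rangle\rho\,d\mu$, which gives the first term in the target identity.

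For the boundary part one is left with $\sum_k\gamma_{k+1}(I_k-I_{k+1})$. The Abel rearrangement, namely the shift $j=k+1$ in one copy of the sum, yields $\sum_k\gamma_{k+1}I_k-\sum_k\gamma_k I_k=\sum_k(\gamma_{k+1}-\gamma_k)I_k$, producing exactly the boundary term claimed. The main obstacle — and the only real piece of analysis beyond bookkeeping — is justifying these two interchanges of sum and integral (in the bulk telescoping and in the boundary rearrangement). For the bulk part this follows from $\rho\in L^\infty(H;\mu)$ by \eqref{S3} together with boundedness of $g$ and $z\mapsto\langle Al,z\rangle\in L^2(H;\mu)$. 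For the boundary part the key is the estimate $|{}_{H_1}\langle l,\eta_{\Gamma_{\alpha_k}}\rangle_{H_1^*}|\leq|l|_{H_1}|\eta_{\Gamma_{\alpha_k}}|_{H_1^*}=|l|_{H_1}$ valid $\|\partial\Gamma_{\alpha_k}\|$-a.e., so that $|I_k|\leq\tfrac{1}{2}\|g\|_\infty|l|_{H_1}\|\partial\Gamma_{\alpha_k}\|(\partial\Gamma_{\alpha_k})$, and the series $\sum_k|\gamma_{k+1}-\gamma_k||I_k|$ converges by hypothesis \eqref{4.1}. Absolute convergence then legitimates the Abel reshuffling and concludes the proof.
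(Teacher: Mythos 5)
Your overall strategy --- expand $\rho$ over the annuli $\Gamma_{\alpha_{k+1}}\setminus\Gamma_{\alpha_k}$, apply the set-wise formula \eqref{9} on each $\Gamma_{\alpha_k}$, telescope the bulk terms back into $\int_H g\langle Al,z\rangle\rho\,d\mu$, and Abel-rearrange the boundary terms --- is essentially the paper's, and the bulk part of your argument is fine. The gap is in the last step. Writing $I_k:=\int_{\partial\Gamma_{\alpha_k}}g\;{}_{H_1}\langle l,\eta_{\Gamma_{\alpha_k}}\rangle_{H_1^*}\,\tfrac{\|\partial\Gamma_{\alpha_k}\|}{2}$, what you must prove is
\begin{equation*}
\lim_{n\to\infty}\sum_{k=-n}^{n}\gamma_{k+1}\bigl(I_k-I_{k+1}\bigr)\;=\;\sum_{k\in\mathbb{Z}}(\gamma_{k+1}-\gamma_k)\,I_k\,,
\end{equation*}
and summation by parts on the left gives $\sum_{k=-n+1}^{n}(\gamma_{k+1}-\gamma_k)I_k+\gamma_{-n+1}I_{-n}-\gamma_{n+1}I_{n+1}$. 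So besides the absolute convergence of $\sum_k|\gamma_{k+1}-\gamma_k|\,|I_k|$, which \eqref{4.1} does give you, you must show that the edge terms $\gamma_{-n+1}I_{-n}$ and $\gamma_{n+1}I_{n+1}$ vanish as $n\to\infty$. Your appeal to ``absolute convergence legitimates the Abel reshuffling'' does not cover this: the two series $\sum_k\gamma_{k+1}I_k$ and $\sum_k\gamma_k I_k$ that you shift against each other need not converge individually, because \eqref{4.1} only controls the differences $|\gamma_{k+1}-\gamma_k|$, and the perimeters $\|\partial\Gamma_{\alpha_k}\|(\partial\Gamma_{\alpha_k})$ need not tend to $0$ as $k\to\pm\infty$ (e.g.\ for slabs $\{x:|\langle x,e_1\rangle|\le\alpha_k\}$ with $\alpha_k\downarrow 0$ the Gaussian perimeter stays bounded away from zero). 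Hence your bound $|I_k|\le\tfrac12\|g\|_\infty|l|_{H_1}\|\partial\Gamma_{\alpha_k}\|(\partial\Gamma_{\alpha_k})$ gives no decay of the edge terms.

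The edge terms do vanish, but for a reason you need to supply: by \eqref{9} itself, $I_k=\tfrac12\int_{\Gamma_{\alpha_k}}\langle l,Dg\rangle\,d\mu-\int_{\Gamma_{\alpha_k}}g\langle Al,z\rangle\,d\mu$. As $k\to-\infty$ both volume integrals tend to $0$ by \eqref{S2} (the integrands are in $L^1(\mu)$ and $\mu(\Gamma_{\alpha_k})\to 0$); as $k\to+\infty$ they tend to the corresponding integrals over all of $H$, which cancel by the Gaussian integration by parts formula on $H$ (integrate \eqref{8} against $\mu$). Since the $\gamma_k$ are bounded by \eqref{S3}, the edge terms go to $0$ and your identity follows. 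This is exactly the point the paper's proof is organized around: it performs the Abel summation at the level of $\rho$ itself, keeping the two extreme indicators $\mathbb{I}_{\Gamma_{\alpha_{-n}}}$ and $\mathbb{I}_{\Gamma_{\alpha_{n+1}}}$ as volume terms (whose limits are computed directly) and never converting them into boundary integrals. With this one additional limit argument your proof closes.
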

\begin{proof}
 We may rewritte $-\rho$ as 
 \begin{equation*}
   \lim\limits_{n \rightarrow \infty} \left( \gamma_{-n+1} \mathbb{I}_{\Gamma_{\alpha-n}} + \sum\limits_{k=-n+1}^{n} (\gamma_{k+1}-\gamma_k) \mathbb{I}_{\Gamma_{\alpha_k}} - \gamma_{n+1} \mathbb{I}_{\Gamma_{\alpha_{n+1}}} \right)
 \end{equation*}
and then noting that 
$$\lim\limits_{n\rightarrow \infty} \frac{1}{2} \int \langle l,Dg \rangle \gamma_{-n+1} \mathbb{I}_{\Gamma_{\alpha-n}} \mathrm{d} \mu = 0= \lim\limits_{n\rightarrow \infty} \int g \langle Al, z \rangle \gamma_{-n+1} \mathbb{I}_{\Gamma_{\alpha-n}}\mu(\mathrm{d}z)$$ and 
\begin{equation*}
 \lim\limits_{n\rightarrow \infty} \frac{1}{2} \int \langle l, Dg \rangle \gamma_{n+1} \mathbb{I}_{\Gamma_{\alpha_{n+1}}} \mathrm{d} \mu = \frac{1}{2} \int\limits_H \langle l, Dg \rangle \bar{\gamma} \mathrm{d} \mu = \int\limits_{H} g \langle Al, z \rangle \bar{\gamma} \mu(\mathrm{d}z) \;,
\end{equation*}
by \eqref{S2} and \eqref{S3}, using \eqref{4.1} we get by \eqref{9} 
\begin{align*}
 &- \mathcal{E} (l(\cdot), g ) = \lim\limits_{n\rightarrow \infty} \left( \sum\limits_{k=-n+1}^{n} \frac{1}{2} \int \langle l, Dg \rangle ( \gamma_{k+1}-\gamma_k) \mathbb{I}_{\Gamma_{\alpha_k}} \mathrm{d} \mu \right) - \frac{1}{2} \int\limits_H \langle l, Dg \rangle \bar{\gamma} \mathrm{d} \mu\\
 &= \lim\limits_{n\rightarrow \infty} \Bigg( \sum\limits_{k=-n+1}^{n}  \int\limits_{\partial \Gamma_{\alpha_k}} g\; { }_{H_1} \langle l, \eta_{\Gamma_{\alpha_k}} \rangle_{H^{*}_1} (\gamma_{k+1} - \gamma_k) \frac{\| \partial \Gamma_{\alpha_k}\|}{2}\\
&\quad \quad \quad + \int g \langle Al, z \rangle \gamma_{-n+1}\mathbb{I}_{\Gamma_{\alpha-n}} \mu (\mathrm{d}z)\\ 
&\quad \quad \quad + \sum\limits_{k=-n+1}^{n}\int g \langle Al, z \rangle (\gamma_{k+1} - \gamma_k) \mathbb{I}_{\Gamma_{\alpha_k}} \mu(\mathrm{d}z) - \int g \langle Al, z \rangle \gamma_{n+1} \mathbb{I}_{\Gamma_{\alpha_{n+1}}} \mu(\mathrm{d}z)  \Bigg)\\
& =  \sum\limits_{k\in \mathbb{Z}} \;\int\limits_{\partial \Gamma_{\alpha_k}} g \;{ }_{H_1} \langle l, \eta_{\Gamma_{\alpha_k}} \rangle_{H^{*}_1} \;( \gamma_{k+1} - \gamma_k ) \frac{\| \partial \Gamma_{\alpha_k} \|}{2} - \int\limits_H g \langle Al, z \rangle \rho  \mu(\mathrm{d}z)\;.
\end{align*}
\end{proof}
\begin{remark}\label{r14}
 An increasing process satisfying \eqref{12} can only be unique up to a constant. This constant has usually to be fixed in dimension one in order to describe local times uniquely.
 In Dirichlet form theory of energy forms the drift is given as the logarithmic derivative 
 \begin{equation*}
  \frac{D \rho}{2 \rho}
 \end{equation*}
and in dimension one in order to fix symmetric local times (cf. \cite{RY99}) one has to choose the symmetric version of $\rho$ at the boundary in the denominator 
(see for instance \cite[(19)]{ORT15} and the following explanations). Therefore, 
\begin{equation}\label{4.3}
 \frac{\gamma_{k+1}-\gamma_k}{2 \cdot \frac{1}{2}( \gamma_{k+1}+\gamma_k)} \| \partial \Gamma_{\alpha_k} \| = \frac{\gamma_{k+1} - \gamma_k}{\gamma_{k+1} + \gamma_k} \| \partial \Gamma_{\alpha_k}\|\;, \quad k \in \mathbb{Z},
\end{equation}
which represents the ``symmetric version'' of the non-absolutely continuous part of the logarithmic derivative in the sense that $\frac{1}{2}(\gamma_{k+1}+ \gamma_k)$ represents the ``symmetric value'' of $\rho$ at the boundary 
$\partial \Gamma_{\alpha_k}$, should represent the symmetric local time. We therefore set
\begin{equation}
L^{\partial \Gamma_{\alpha_k}}_t(X):= \frac{\gamma_{k+1} + \gamma_k}{2} L^{\partial \Gamma_{\alpha_k}}_t\;, \quad t \geq0, \; k \in \mathbb{Z},
\end{equation}
where $(L^{\partial \Gamma_{\alpha_k}}_t)_{t\geq0} \in \mathcal{A}_{+}$ is uniquely associated via the Revuz correspondence to $\| \partial \Gamma_{\alpha_k} \|$. It then follows that the measures in \eqref{4.3} are uniquely associated to
\begin{equation*}
 \left(\frac{\gamma_{k+1}- \gamma_k}{\gamma_{k+1} + \gamma_k} L^{\partial \Gamma_{\alpha_k}}_t(X) \right)_{t\geq0}\;, \quad k \in \mathbb{Z}.
\end{equation*}
Finally, setting
\begin{equation}\label{e4.5}
 p_k:= \frac{ \gamma_{k+1}}{\gamma_{k+1}+\gamma_k}\;, \quad k \in \mathbb{Z},
\end{equation}
we get 
\begin{equation*}
 p_k - (1 - p_k)= 2 p_k -1 = \frac{ \gamma_{k+1}-\gamma_k}{\gamma_{k+1}+\gamma_k}\;, \quad k \in \mathbb{Z}.
\end{equation*}
Note that in finite dimensions (see for instance \cite{Wei84, Ra11}) $p_k$ and $ 1- p_k$ represent the probabilities of permeability through $\partial \Gamma_{\alpha_k}$ depending on from which side of $\partial \Gamma_{\alpha_k}$
the process approaches $\partial \Gamma_{\alpha_k}$. Here, we do not intend to develop rigourously such an interpretation. However, since $(\mathcal{E}, D(\mathcal{E}))$ is recurrent and irreducible, we know indeed that the recurrent
conservative diffusion $\mathbb{M}$ passes infinitely often through each membrane $\partial \Gamma_{\alpha_k}$ as long as the interior of $\Gamma_{\alpha_k}$ is non-empty.
This can be shown similarly to the finite dimensional case (cf. \cite[Section 6]{Tr05}).
\end{remark}
Now, using the standard identification method that we used in Proposition \ref{p3.4}, we obtain from Lemma \ref{l13}, 
Proposition \ref{l14} and Remark \ref{r14}.
\begin{thm}\label{t4.4}
 There exists an $\mathcal{E}$-exceptional set $N \subset F$, such that for all $z \in H \setminus N$, under $P_z$ there exists an $\mathcal{M}_t$-cylindrical Wiener process $W^z$, such that the 
sample paths of $\mathbb{M}$ from Theorem \ref{th1}(iii)
 on $H$ satisfy:\\
 for all $l \in D(A) \cap H_1$ 
 \begin{align}\label{e4.6}
  \langle l, X_t - X_0 \rangle &= \int_0^t \langle l, \mathrm{d} W^z_s \rangle - \int_0^t \langle Al, X_s \rangle \mathrm{d} s \\
  &+\sum\limits_{k\in \mathbb{Z}}(2 p_k -1 ) \int_0^t { }_{H_1} \langle l, \eta_{\Gamma_{\alpha_k}} (X_s) \rangle_{H^{*}_1} \mathrm{d} L^{\partial \Gamma_{\alpha_k}}_s (X), \quad t \geq 0, \; P_z\textup{-a.s.} \notag
 \end{align}
where $(p_k)_{k \in \mathbb{Z}} \subset (0,1), \; \left( L^{\partial \Gamma_{\alpha_k}}(X) \right)_{k \in \mathbb{Z}}$ are specified in Remark \ref{r14}. Moreover, it holds for any $k \in \mathbb{Z}$ 
\begin{equation*}
  \int_0^t \mathbb{I}_{\partial \Gamma_{\alpha_k}} (X_s) \mathrm{d} L^{\partial \Gamma_{\alpha_k}}_s (X) = L^{\partial \Gamma_{\alpha_k}}_t (X), \quad t \geq0 \; P_z\textup{-a.s.}
\end{equation*}
In particular, if $\mathbb{I}_{\Gamma_{\alpha_k}} \in BV(H,H), \; k\in \mathbb{Z}$, then above we can replace $H_1$ by $H$ and ${ }_{H_1} \langle \cdot, \cdot \rangle_{H^{*}_1}$ by $\langle \cdot, \cdot \rangle$.
\end{thm}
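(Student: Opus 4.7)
The plan is to mimic the identification argument of Proposition \ref{p3.4}, replacing the single IBP formula (Proposition \ref{p6}) by the countable one (Proposition \ref{l14}) and the single local time $L^{\partial\Gamma}$ by the convergent series from Lemma \ref{l13}(ii). Fix $l \in D(A)\cap H_1$. The right-hand side of Proposition \ref{l14} defines a signed measure
\[
\nu^l := -\langle Al, \cdot\rangle\, \rho\, \mu + \sum_{k\in\mathbb{Z}} {}_{H_1}\langle l, \eta_{\Gamma_{\alpha_k}}\rangle_{H_1^*}\, (\gamma_{k+1}-\gamma_k)\, \frac{\|\partial\Gamma_{\alpha_k}\|}{2}.
\]
Its drift part is a finite signed measure (since $\rho$ is bounded and $\langle Al,\cdot\rangle$ is Gaussian hence in $L^1(\mu)$) whose positive and negative parts are $\mathcal{E}$-smooth, being $\mu$-absolutely continuous. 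Its boundary part is the difference of two finite $\mathcal{E}$-smooth measures by Lemma \ref{l13}(ii), the estimate $|{}_{H_1}\langle l,\eta_{\Gamma_{\alpha_k}}\rangle_{H_1^*}|\leq |l|_{H_1}$ and \eqref{4.1}. Proposition \ref{p3}(1) therefore applies and yields
\[
N^{[l]}_t = -\int_0^t \langle Al, X_s\rangle\, ds + \sum_{k\in\mathbb{Z}} \frac{\gamma_{k+1}-\gamma_k}{2} \int_0^t {}_{H_1}\langle l, \eta_{\Gamma_{\alpha_k}}(X_s)\rangle_{H_1^*}\, dL^{\partial\Gamma_{\alpha_k}}_s,
\]
where I used Remark \ref{r2}(ii) together with the fact that $(t)_{t\geq 0}$ has Revuz measure $\rho\,\mu$ (so integrating $-\langle Al,\cdot\rangle$ against $\rho\,\mu$ produces the drift above after absorbing the factor $\rho$).

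Next I construct the cylindrical Wiener process as in Proposition \ref{p3.4}. Each $e_j$ lies in $D(A)\cap H_1$ (since $Ae_j=\alpha_j e_j$ and $|e_j|_{H_1}^2=c_j^2<\infty$), so defining $W^z_j(t) := \langle e_j, X_t - z\rangle - N^{[e_j]}_t$ and invoking Proposition \ref{p3}(2) with polarization gives $\langle M^{[e_i]}, M^{[e_j]}\rangle_t = \delta_{ij}\, t$. Lévy's characterization provides, outside a single $\mathcal{E}$-exceptional set $N$, an $(\mathcal{M}_t)$-cylindrical Wiener process $W^z = (W^z_j e_j)_{j\geq 1}$. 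Linearity in $l$ of both the Fukushima decomposition and of $\nu^l$ extends the resulting identity to all $l \in D(A)\cap H_1$, and the renormalization of Remark \ref{r14} rewrites
\[
\frac{\gamma_{k+1}-\gamma_k}{2}\, dL^{\partial\Gamma_{\alpha_k}}_s = \frac{\gamma_{k+1}-\gamma_k}{\gamma_{k+1}+\gamma_k}\, dL^{\partial\Gamma_{\alpha_k}}_s(X) = (2 p_k-1)\, dL^{\partial\Gamma_{\alpha_k}}_s(X),
\]
producing \eqref{e4.6}. The support identity for $L^{\partial\Gamma_{\alpha_k}}(X)$ follows from Remark \ref{r2}(ii) and $\operatorname{supp}\|\partial\Gamma_{\alpha_k}\| \subset \partial\Gamma_{\alpha_k}$, and the final sentence is immediate since $\mathbb{I}_{\Gamma_{\alpha_k}} \in BV(H,H)$ forces $\eta_{\Gamma_{\alpha_k}}$ to take values in $H^* \equiv H$.

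The main obstacle is the countable nature of the reflection term: one has to verify that the series defining $N^{[l]}$ converges locally uniformly in $t$ for quasi-every starting point, and that a single $\mathcal{E}$-exceptional set $N$ suffices for all $k \in \mathbb{Z}$, all basis vectors $e_j$, and all $l$ in a countable dense subset of $D(A)\cap H_1$. Uniform convergence in $t$ is exactly the content of Lemma \ref{l13}(ii), applied componentwise together with the bound $|{}_{H_1}\langle l, \eta_{\Gamma_{\alpha_k}}\rangle_{H_1^*}|\leq |l|_{H_1}$. The joint exceptional set is standard since countable unions of $\mathcal{E}$-exceptional sets are exceptional; continuity in $l$ of both sides of \eqref{e4.6} (with respect to the graph norm of $A$) then extends the identity from a countable dense subset to all of $D(A)\cap H_1$.
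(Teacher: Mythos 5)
Your proposal is correct and follows essentially the same route as the paper, whose proof of Theorem \ref{t4.4} is just the one-line instruction to repeat the identification method of Proposition \ref{p3.4} using Proposition \ref{l14} for the Revuz identification of $N^{[l]}$, Lemma \ref{l13} for the smoothness and locally uniform convergence of the series of local times, and Remark \ref{r14} for the renormalization producing the factors $2p_k-1$. Your fleshed-out treatment of the convergence of the boundary series, the single exceptional set, and the passage from basis vectors to general $l\in D(A)\cap H_1$ supplies exactly the details the paper leaves implicit.
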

\begin{examples}\label{exam1}
\begin{itemize}
 \item[(i)] Let $\Gamma \subset H$ be closed and convex such that $\mathbb{I}_\Gamma \in BV(H,H_1)$ (for concrete examples of such $\Gamma$ see (ii) or (${\it0}4$)). For $k \in \mathbb{Z}$ set 
 \begin{align*}
  \Gamma_{\alpha_k} :=\begin{cases}\emptyset  &\text{ if } \;k \leq -1,\\  \Gamma  &\text{ if } \;k =0,\\H & \text{ if }\;k \geq1, \end{cases}
 \end{align*}
and 
\begin{align*}
 \gamma_k :=\begin{cases}\frac{1-p}{p}  &\text{ if } \;k \leq 0,\\  1  &\text{ if } \;k \geq1. \end{cases}
\end{align*}
where $p \in (0,1)$ is fixed. \\
Then $\rho= \frac{1-p}{p} \mathbb{I}_\Gamma + \mathbb{I}_{H\setminus \Gamma}$, \eqref{S1}-\eqref{S3} are satisfied, and $(p_k)_{k \in \mathbb{Z}}$ as in \eqref{e4.5} satisfies 
\begin{align*}
 p_k = \begin{cases}\frac{1}{2}  &\text{ if } \;k \leq -1,\\  p  &\text{ if } \;k =0,\\ \frac{1}{2} & \text{ if }\; k \geq1. \end{cases}
\end{align*}
Therefore in this case \eqref{e4.6} takes the form 
\begin{align}\label{e4.7}
 \langle l, X_t-X_0 \rangle &= \int_0^t \langle l, \mathrm{d} W_s^z \rangle - \int_0^t \langle Al, X_s \rangle \mathrm{d} s \\
 &  + ( 2 p -1) \int_0^t { }_{H_1}\langle l, \eta_\Gamma(X_s) \rangle_{H^{*}_1}\, \mathrm{d} L^{\partial \Gamma}_s (X)\,, \quad t \geq 0,\; P_z\text{-a.s.} \notag
\end{align}
\eqref{e4.7} can be seen as the infinite dimensional analogue of the $p$-skew reflected OU-process, i.e. the process which is associated to the closure of 
\begin{equation*}
 \frac{1}{2} \int\limits_{\mathbb{R}} f'(x) g'(x) \left( \frac{1-p}{p} \mathbb{I}_{(-\infty, b)}+ \mathbb{I}_{(b, \infty)} \right) e^{-\beta x^2} \mathrm{d} x\,, \;\; f,g \in C^\infty_0(\mathbb{R}),
\end{equation*}
on $L^2(\mathbb{R}, e^{-\beta x^2} \mathrm{d}x)$, where $p, \beta, b \in \mathbb{R}, p \in (0,1), \beta>0$, are arbitrary, but fixed parameters (see for instance \cite{Tr05}).
\item[(ii)] Let $H= L^2(0,1),\; \Gamma_\alpha:=\{ f \in H | f \geq - \alpha \}, \alpha \in \mathbb{R}, \alpha >0$. Then obviously $\Gamma_\alpha$ is
closed and convex. Let further $A=-\frac{1}{2} \frac{\mathrm{d}^2}{\mathrm{d} r^2}$ with Dirichlet boundary conditions. In this case $e_j = \sqrt{2} \sin (j \pi r), j \in \mathbb{N}$, is the corresponding eigenbasis. 
Defining
\begin{equation*}
 c_j:= (j \cdot \pi )^{\frac{1}{2}+\varepsilon}\,, \; j \in \mathbb{N},
\end{equation*}
where $\varepsilon \in (0, \frac{3}{2}]$, we obtain that $(H_1,H, H^{*}_1)$ as defined in Section \ref{sec2} is a Gelfand triple (cf. \cite[Section 6]{RZZ12}).
Moreover, by \cite[Theorem 6.2, Remark 6.3]{RZZ12} \, $\mathbb{I}_{\Gamma_\alpha} \in BV(H,H_1)\setminus  BV(H,H)$ for any $\alpha >0$. Now choose sequences 
$(\alpha_k)_{k\in \mathbb{Z}},\, (\gamma_k)_{k\in \mathbb{Z}} \subset (0, \infty)$ 
such that the conditions \eqref{S1}-\eqref{S3} are satisfied and apply Theorem \ref{t4.4} to obtain a concrete example of a countably skewed OU-process.
\end{itemize}

\end{examples}
\begin{remark}\label{bouzam}
By \cite[Theorems 6.4 and 6.5]{RZZ12} it is also possible to treat the case $\alpha=0$ in Example \ref{exam1}(ii). For instance, letting
$(\alpha_k)_{k\in \mathbb{Z}},\, (\gamma_k)_{k\in \mathbb{Z}} \subset (0, \infty)$ and $p$ be as in Example \ref{exam1}(i) and choosing $\Gamma=\Gamma_{\alpha_0}=\Gamma_{0}:=\{ f \in H | f \geq 0 \}$. 
This may be treated in forthcoming work.
An attempt to describe the stochastic dynamics of a $p$-skew OU-process under different aspects, for instance using finite-dimensional approximations and Mosco convergence, 
is undertaken in \cite{BouZ14}.
\end{remark}

\addcontentsline{toc}{chapter}{References}

Michael R\"ockner\\ 
Fakult\"at f\"ur Mathematik\\
Universit\"at Bielefeld\\
Universit\"atsstrasse 25\\
33615 Bielefeld \\
E-mail: roeckner@math.uni-bielefeld.de\\ \\
Gerald Trutnau\\
Department of Mathematical Sciences and \\
Research Institute of Mathematics of Seoul National University,\\
San56-1 Shinrim-dong Kwanak-gu, \\
Seoul 151-747, South Korea,  \\
E-mail: trutnau@snu.ac.kr
\end{document}